\renewcommand\eqref[1]{(\ref{#1})} 
\numberwithin{equation}{section}
\theoremstyle{plain}
\newtheorem{thm}{Theorem}[section]
\newtheorem{prop}[thm]{Proposition}
\newtheorem{cor}[thm]{Corollary}
\theoremstyle{definition}
\newtheorem{defn}[thm]{Definition}
\newtheorem{rem}[thm]{Remark}
\begin{document}

   \title[Local Hardy and Rellich inequalities for sums of squares]
   {Local Hardy and Rellich inequalities for sums of squares of vector fields}

\author[Michael Ruzhansky]{Michael Ruzhansky}
\address{
  Michael Ruzhansky:
  \endgraf
  Department of Mathematics
  \endgraf
  Imperial College London
  \endgraf
  180 Queen's Gate, London SW7 2AZ
  \endgraf
  United Kingdom
  \endgraf
  {\it E-mail address} {\rm m.ruzhansky@imperial.ac.uk}
  }
\author[Durvudkhan Suragan]{Durvudkhan Suragan}
\address{
  Durvudkhan Suragan:
  \endgraf
  Institute of Mathematics and Mathematical Modelling
  \endgraf
  125 Pushkin str.
  \endgraf
  050010 Almaty
  \endgraf
  Kazakhstan
  \endgraf
  and
  \endgraf
  Department of Mathematics
  \endgraf
  Imperial College London
  \endgraf
  180 Queen's Gate, London SW7 2AZ
  \endgraf
  United Kingdom
  \endgraf
  {\it E-mail address} {\rm d.suragan@imperial.ac.uk}
  }

\thanks{The authors were supported in parts by the EPSRC
 grant EP/K039407/1 and by the Leverhulme Grant RPG-2014-02,
 as well as by the MESRK grant 5127/GF4.}

     \keywords{Sum of squares of vector fields, smooth manifolds, Rellich inequality, Hardy inequality, uncertainty principle}
     \subjclass{35A23, 35H20}

     \begin{abstract}
     We prove local refined versions of Hardy's and Rellich's inequalities as well as of uncertainty principles for sums of squares of vector fields on bounded sets of smooth manifolds under certain assumptions on the vector fields. We also give some explicit examples, in particular, for  sums of squares of vector fields on Euclidean spaces and for sub-Laplacians on stratified Lie groups.
     \end{abstract}
     \maketitle

\section{Introduction}

Let $M$ be a smooth manifold of dimension $n$ with a volume form $d\nu$. Let $\{X_{k}\}_{k=1}^{N}$, $N\leq n$, be a family of real vector fields on $M$. We denote by $\mathcal L$ the sum of their squares:
\begin{equation}\label{EQ:L}
\mathcal{L}:=\sum_{k=1}^{N}X_{k}^{2}.
\end{equation}
Operators in this form have been much studied in the literature.
For example, it is well-known from H\"ormander \cite{H67} that if the commutators of the vector fields $\{X_{k}\}_{k=1}^{N}$ generate the Lie algebra, the operator $\mathcal L$ is locally hypoelliptic. Such operators have been also studied under weaker conditions or without the hypoellipticity property.

Let us formulate assumptions that will be appearing in this paper. After this, we will discuss several settings when these assumptions are satisfied, most notably, on stratified Lie groups, as well as for operators on $\mathbb R^{n}$ satisfying the H\"ormander commutator condition of different steps.
Thus, the first assumption that we will be sometimes making is the following, made for a point $y\in M$:

\begin{itemize}
\item[(${\rm A}_{y}$)] For $y\in M$, assume that there is an open set $T_{y}\subset M$ containing $y$ such that the operator $-\mathcal{L}$
has a  fundamental solution in $T_{y}$, that is, there exists a function
$\Gamma_y\in C^{2}(T_{y}\setminus\{y\})$ such that
\begin{equation}
-\mathcal{L}\Gamma_y=\delta_y\;\textrm{ in }\;T_{y},
\end{equation}
where $\delta_y$ is the Dirac $\delta$-distribution at $y$.
\end{itemize}
We will also often write $\Gamma(x,y)=\Gamma_y(x)$ or just $\Gamma(x)$ if the point $y$ is fixed. The space $C^{2}$ here stands for the space of functions with continuous second derivatives with respect to $\{X_{k}\}_{k=1}^{N}$, see
Section \ref{Sec4} for more details.
Although we do not explicitly assume that $\mathcal L$ is hypoelliptic, the existence of a fundamental solution will imply it.

Sometimes we will need the following stronger assumption:
\begin{itemize}
\item[(${\rm A}_{y}^{+}$)] For $y\in M$, assume that
(${\rm A}_{y}$) holds and, moreover, we have
$$\Gamma_y(x)>0 \textrm{ in } T_{y}\setminus\{y\},
\textrm{ and }  \frac{1}{\Gamma_y}(y)=0.
$$
\end{itemize}
As $\Gamma_{y}$ blows up at $y$, $\frac{1}{\Gamma_y}$ is usually well-defined and is equal to $0$ at $y$.

The second assumption comes implicitly in the following definition.
There and everywhere in the sequel, we will be using the notation
$\langle X_{k}, d\nu\rangle$ for the duality product of $X_{k}$ with the volume form $d\nu$,
that is, since $d\nu$ is an $n$-form, $\langle X_{k}, d\nu\rangle$ is an $(n-1)$-form on $M$.

\begin{defn}\label{DEF:adomain}
We say that an open bounded set $\Omega\subset M$
is an {\em admissible domain} if
its boundary $\partial\Omega$ has no self-intersections, and if
the vector fields $\{X_{k}\}_{k=1}^{N}$ satisfy
\begin{equation}\label{astokes}
\sum_{k=1}^{N}\int_{\Omega}X_{k}f_{k} d\nu=
\sum_{k=1}^{N}\int_{\partial\Omega}f_{k} \langle X_{k},d\nu\rangle,
\end{equation}
for all $f_{k}\in C^{1}(\Omega)\bigcap C(\overline{\Omega})$, $k=1,\ldots,N$.
We also say that an admissible domain $\Omega$ is {\em strongly admissible with $y\in M$} if
(${\rm A}_{y}$) is satisfied, $\Omega\subset T_{y}$, and \eqref{astokes} holds for $f_{k}=vX_{k}\Gamma_{y}$ for all $v\in C^{1}(\Omega)\bigcap C(\overline{\Omega})$.
\end{defn}

The assumption of (strong) admissibility may look complicated at
the first sight, but in the examples below we show that it is in
fact rather natural and is satisfied in a number of natural
settings, so that actually any open bounded set with a piecewise
smooth boundary without self-intersections is strongly admissible,
see also especially Proposition \ref{stokes}. The condition that
the boundary $\partial\Omega$ has no self-intersections here also
means that $\partial\Omega$ is orientable. For brevity, we will
say that such boundaries are {\em simple}. Thus, before we proceed
any further, let us point out several important rather general
settings when the above conditions are all satisfied:

\begin{itemize}
\item[(E1)] Let $M$ be a stratified Lie group, $n\geq 3$, and let $\{X_{k}\}_{k=1}^{N}$ be left-invariant vector fields giving the first stratum of $M$.
Then for every $y\in M$ the assumption (${\rm A}_{y}^{+}$) is satisfied with $T_{y}=M$. Moreover, any open bounded set $\Omega\subset M$ with a piecewise smooth simple boundary is strongly admissible.
\item[(E2)] Let $M=\mathbb{R}^{n}$, $n\geq 3$, and let the vector fields $X_{k},\,k=1,\ldots,N$, $N\leq n$, be
 in the form
\begin{equation}\label{Xk}
X_{k}=\frac{\partial}{\partial x_{k}}+
\sum_{m=N+1}^{n}a_{k,m}(x)
\frac{\partial}{\partial x_{m}},
\end{equation}
where $a_{k,m}(x)$ are locally $C^{1,\alpha}$-regular for some $0<\alpha\leq
1$, where $C^{1,\alpha}$ stands for the space of functions with $X_{k}$-derivatives in the H\"older space $C^{\alpha}$ with respect to the
control distance defined by these vector fields\footnote{see Section \ref{Sec4} for the precise definition.}. Assume also\footnote{and this condition implies
that
$\{X_{k}\}_{k=1}^{N}$ satisfy H\"ormander's commutator condition of step two.} that
$$
\frac{\partial}{\partial x_{k}}=\sum_{1\leq i<j\leq N} \lambda_{k}^{i,j}(x)[X_{i},X_{j}]
$$
for all $k=N+1,\ldots,n$, with $\lambda_{k}^{i,j}\in L^{\infty}_{loc}(M).$
Then for any $y\in M$ the assumption (${\rm A}_{y}^{+}$) is satisfied.
Moreover, any open bounded set
$\Omega\subset M$ with a piecewise smooth simple boundary is strongly admissible.
\item[(E3)] More generally,
let $M=\mathbb{R}^{n}$, $n\geq 3$, and let the vector fields $X_{k},\,k=1,\ldots,N$, $N\leq n$, satisfy the
 H\"ormander commutator condition of step $r\geq 2$.
 Assume that all $X_{k},\,k=1,\ldots,N$,
belong to $C^{r,\alpha}(U)$ for some $0<\alpha\leq 1$
and $U\subset M$, and if $r=2$ we assume $\alpha=1$.
Then for any $y\in M$ the assumption (${\rm A}_{y}^{+}$) is satisfied.
Moreover, if $X_{k}$'s are in the form \eqref{Xk}, then
any open bounded set $\Omega\subset M$ with
a piecewise smooth simple boundary is strongly admissible.
\end{itemize}

Concerning example (E1), the validity of (${\rm A}_{y}^{+}$) for any $y$ follows from Folland \cite{Fol75} while the validity of \eqref{astokes} and the strong admissibility (as in Definition \ref{DEF:adomain}) for any
domain with piecewise smooth simple boundary was shown by the authors in \cite{Ruzhansky-Suragan:Carnot groups}.

Concerning (E2), the existence of a local fundamental solution, that is (${\rm A}_{y}$) for any $y\in M$ was shown by
Manfredini \cite{M12}, see also S{\'a}nchez-Calle \cite{S84} or Fefferman and
S{\'a}nchez-Calle \cite{FS86} for the positivity, thus assuring (${\rm A}_{y}^{+}$). The validity of \eqref{astokes} and the strong admissibility
for any domain with piecewise smooth simple boundary will be shown in Section
\ref{Sec4}, as well as some more details on the space $C^{r,\alpha}$ will be
given.

The condition \eqref{Xk} in (E2) and (E3) is not restrictive. In fact, it can be shown that by a
change of variables, any collection of linearly independent vector fields (say,
locally $C^{1,\alpha}$-regular) can be transformed to a collection of the same regularity and satisfying
condition \eqref{Xk}, see e.g. Manfredini \cite[page 975]{M12}.

Concerning (E3), the validity of (${\rm A}_{y}^{+}$) was shown by
Bramanti, Brandolini, Manfredini and Pedroni
\cite[Theorem 4.8 and Theorem 5.9]{BBMP}.
The validity of \eqref{astokes} and the strong admissibility
for any domain with piecewise smooth simple boundary will be shown in Section \ref{Sec4}.

The assumptions (${\rm A}_{y}$) or (${\rm A}_{y}^{+}$) hold also in some other settings.
If $\mathcal{L}$ is a hypoelliptic operator, then
the subject of the existence of local and global fundamental solutions for $\mathcal L$ is well-studied,
see e.g. \cite{M12, BLU2004, FS86, S84, OR}
for more general discussions.

The main aim of this paper is to obtain a local Hardy inequality on $M$
generalising but also refining the known Hardy inequalities by the inclusion of
boundary terms. In turn, this will also imply the corresponding versions of local
uncertainty principles with contributions from the boundary.

For these, we first establish the analogues of Green's first and second formulae in admissible domains $\Omega$
with suitable boundary expressions. Note that difficulties related to the existence of
characteristic points on $\partial\Omega$ do not appear in our formulations.
Moreover, unless we want to take $u=\Gamma$, we do not require the existence of
fundamental solutions.
Thus, our analogue of Green's first formula is the following:
if $v\in C^{1}(\Omega)\bigcap C(\overline{\Omega})$ and $u\in \{C^{2}(\Omega)\bigcap C^{1}(\overline{\Omega})\}\bigcup\{\Gamma\}$, then
\begin{equation} \label{g10}
\int_{\Omega}\left((\mathcal{\widetilde{\nabla}}v) u+v\mathcal{L}u\right) d\nu=\int_{\partial\Omega}v\langle \mathcal{\widetilde{\nabla} }u,d\nu\rangle,
\end{equation}
where we define
$$\widetilde{\nabla} u:=\sum_{k=1}^{N} (X_{k} u) X_{k}.$$
The analogue of Green's second formula is as follows:
if $u,v\in \{C^{2}(\Omega)\bigcap C^{1}(\overline{\Omega})\}\bigcup\{\Gamma\},$ then
\begin{equation}\label{g20}
\int_{\Omega}(u\mathcal{L}v-v\mathcal{L}u)d\nu
=\int_{\partial\Omega}(u\langle\widetilde{\nabla}  v,d\nu\rangle-v\langle \widetilde{\nabla}  u,d\nu\rangle).
\end{equation}
These formulae will be proved in Proposition \ref{greenformulae}.
As a consequence we also obtain several representation formulae for functions in $\Omega$.
Other versions of the integration by parts formulae are known, see e.g. \cite{CGL}, but
\eqref{g10} will become instrumental in our proof of the Hardy inequality.

The local Hardy inequality on $M$ will be expressed in terms of the fundamental solution $\Gamma=\Gamma_{y}$ in (${\rm A}_{y}$). Consequently, as an advantage over the known things in the setting of (E1), we do not need to assume that $\Gamma=C d^{2-Q}$ for a quasi-distance (or Carnot-Carath{\'e}odory distance) $d$.
If we fix some $y\in M$ and
the corresponding $T_{y}$ and $\Gamma_y$, we may just write $\Gamma$ for brevity, if the context is clear.
Thus, we show that for $\alpha\in \mathbb{R}$, $\alpha>2-\beta$ and $\beta>2$, in any strongly admissible domain $\Omega\subset T_{y}$
we have the inequality
\begin{multline}\label{iLH20}
\qquad \qquad \int_{\Omega}\Gamma^{\frac{\alpha}{2-\beta}}
|\nabla_{X} u|^{2} \,d\nu\geq
\left(\frac{\beta+\alpha-2}{2}\right)^{2}\int_{\Omega}
\Gamma^{\frac{\alpha-2}{2-\beta}} |\nabla_{X} \Gamma^{\frac{1}{2-\beta}}|^{2}
|u|^{2}\,d\nu\\+
\frac{\beta+\alpha-2}{2(\beta-2)}\int_{\partial\Omega}\Gamma^{\frac{\alpha}{2-\beta}-1}|u|^{2}
\langle\widetilde{\nabla}\Gamma,
d\nu\rangle,
\end{multline}
for all $u\in C^{1}(\Omega)\bigcap C(\overline{\Omega})$,
where we denote
$$\nabla_{X}=(X_{1},\ldots,X_{N}).$$
Consequently, this implies local versions of uncertainty principles on $M$ that will be given in Corollary \ref{Luncertainty}.

One can readily see that the inequality \eqref{iLH20} extends the classical Hardy inequality.
Indeed, in the case of ${M}=\mathbb R^{n}$ and $X_{k}=
\frac{\partial}{\partial x_{k}},\,k=1,\ldots,n$, the inequality \eqref{iLH20} recovers the classical Hardy inequality:
taking $\alpha=0$ and $\beta=n\geq 3$, the fundamental solution for the Laplacian is given by $\Gamma(x)=C_n|x|^{2-n}$ for some
constant $C_{n}$ and $|x|$ the Euclidean norm, so that \eqref{iLH20}
reduces to the classical Hardy inequality
\begin{equation}\label{HRn}
\int_{\mathbb{R}^{n}}|\nabla u(x)|^{2}dx\geq\left(\frac{n-2}{2}\right)^{2} \int_{\mathbb{R}^{n}}\frac{|u(x)|^{2}}{|x|^{2}}dx,\quad n\geq 3,
\end{equation}
where $\nabla$ is the standard gradient in $\mathbb{R}^{n}$, $u\in C_{0}^{\infty}(\mathbb{R}^{n}\backslash \{0\})$,
and the constant $\left(\frac{n-2}{2}\right)^{2}$ is known to be sharp. The constant $C_n$ does not enter
\eqref{HRn} due to the scaling invariance of the inequality \eqref{iLH20} with respect to the multiplication of $\Gamma$ by positive constants.
Hardy type inequalities have been intensively studied, see e.g. Davies and Hinz \cite{Davies-Hinz}, and Davies \cite{Davies} for a review and their applications. We also refer to more recent paper of Hoffmann-Ostenhof and Laptev \cite{Laptev15} on this subject (see also \cite{HHLT-Hardy-many-particles} and \cite{EKL:Hardy-p-Lap}) and to further references therein.
Certain Hardy and Rellich inequalities for sums of squares have been considered by
Grillo \cite{Grillo:Hardy-Rellich-PA-2003}, compared to which our results provide refinements from several points of view.

If $M$ is a homogeneous Carnot group or a stratified group, so that we are in the setting of (E1),
and $X_k$'s are the vectors from the first stratum, \eqref{iLH20} reduces to versions obtained by the authors in
\cite{Ruzhansky-Suragan:Carnot groups} using
the $\mathcal L$-gauge  $d$: taking $\alpha=0$, $\beta=Q\geq 3$,
and $d(x)=\Gamma(x,0)^{\frac{1}{2-Q}}$,
where $Q$ is the homogeneous dimension of the group, we get
\begin{equation}\label{HRC}
 \int_{\Omega}
|\nabla_{X} u|^{2} \,d\nu\geq
\left(\frac{Q-2}{2}\right)^{2}\int_{\Omega}
\frac{|\nabla_{X} d|^{2}}{d^2}
|u|^{2}\,d\nu\\+
\frac{1}{2}\int_{\partial\Omega} d^{Q-2} |u|^{2}
\langle\widetilde{\nabla}d^{2-Q},
d\nu\rangle,
\end{equation}
with the sharp constant $\left(\frac{Q-2}{2}\right)^{2}$,
as well as its weighted versions. Without the second
(boundary) term this is known, for example on the Heisenberg group \cite{GL} for a particular choice of $d(x)$, or on
Carnot groups \cite{GolKom}, which inspired our proof. We refer to these papers as well as to
\cite{GZ} for other references on this subject,
and to \cite{BCG} and \cite{BFG} for Besov space versions of Hardy inequalities on the
Heisenberg group and on graded groups, respectively. Certain boundary value considerations on the Heisenberg group also
appeared in \cite{Ruzhansky-Suragan:Kohn-Laplacian}.

The inequality \eqref{iLH20} can be thought of as a refinement of the usual Hardy
inequality from the point of view of the boundary term since this boundary term in
\eqref{iLH20} can be positive (see  \cite[Section 7]{Ruzhansky-Suragan:Carnot
groups}), thus refining the versions of the Hardy inequality when $u$ is assumed
to be compactly supported in $\Omega$ and, therefore, this boundary term is not
present. As a consequence, this also brings the corresponding refinements to
the uncertainty principles on $M$. We call these inequalities local due to the
presence of a contribution from the boundary.

\medskip
Let $y\in M$ be such that {\rm (${\rm A}_{y}^{+}$)} holds with the fundamental solution $\Gamma=\Gamma_{y}$ in $T_{y}$.
Let $\Omega\subset T_{y}$ be a strongly admissible domain,
$\alpha\in \mathbb{R}$,\ $\beta>\alpha>4-\beta$, $\beta>2$ and $R\geq\, e\, {\rm sup}_{\Omega}\Gamma^{\frac{1}{2-\beta}}$. Let
$u\in C^{2}(\Omega)\bigcap C^{1}(\overline{\Omega})$.
Then we prove the following generalised local Rellich
inequalities (Theorem \ref{LRellich}):
\begin{multline}\label{EQ:R1}
\qquad \qquad \int_{\Omega}\frac{\Gamma^{\frac{\alpha}{2-\beta}}}
{|\nabla_{X}\Gamma^{\frac{1}{2-\beta}}|^{2}}
|\mathcal{L}u|^{2}d\nu\geq \frac{(\beta+\alpha-4)^{2}(\beta-\alpha)^{2}}{16}\int_{\Omega}
\Gamma^{\frac{\alpha-4}{2-\beta}} |\nabla_{X} \Gamma^{\frac{1}{2-\beta}}|^{2}
|u|^{2}\,d\nu
\\+
\frac{(\beta+\alpha-4)^{2}(\beta-\alpha)}{4(\beta-2)}\int_{\partial\Omega}
\Gamma^{\frac{\alpha-2}{2-\beta}-1}|u|^{2}
\langle\widetilde{\nabla}\Gamma,
d\nu\rangle+\frac{(\beta+\alpha-4)(\beta-\alpha)}{4}\mathcal{C}(u)
\end{multline}
and
\begin{multline}\label{EQ:R2}
\qquad \qquad \int_{\Omega}\frac{\Gamma^{\frac{\alpha}{2-\beta}}}
{|\nabla_{X}\Gamma^{\frac{1}{2-\beta}}|^{2}}
|\mathcal{L}u|^{2}d\nu\geq \frac{(\beta+\alpha-4)^{2}(\beta-\alpha)^{2}}{16}\int_{\Omega}
\Gamma^{\frac{\alpha-4}{2-\beta}} |\nabla_{X} \Gamma^{\frac{1}{2-\beta}}|^{2}
|u|^{2}\,d\nu\\+\frac{(\beta+\alpha-4)(\beta-\alpha)}{8}\int_{\Omega}\Gamma^{\frac{\alpha-4}{2-\beta}}
|\nabla_{X} \Gamma^{\frac{1}{2-\beta}}|^{2}\,
\left({\ln}\frac{R}{\Gamma^{\frac{1}{2-\beta}}}\right)^{-2}|u|^{2}d\nu
\\+\frac{(\beta+\alpha-4)(\beta-\alpha)}{4(\beta-2)}\int_{\partial\Omega}
\Gamma^{\frac{\alpha-2}{2-\beta}-1}\left({\ln}\frac{R}{\Gamma^{\frac{1}{2-\beta}}}\right)^{-1}|u|^{2}
\langle\widetilde{\nabla}\Gamma,
d\nu\rangle
\\+
\frac{(\beta+\alpha-4)^{2}(\beta-\alpha)}{4(\beta-2)}\int_{\partial\Omega}
\Gamma^{\frac{\alpha-2}{2-\beta}-1}|u|^{2}
\langle\widetilde{\nabla}\Gamma,
d\nu\rangle+\frac{(\beta+\alpha-4)(\beta-\alpha)}{4}\mathcal{C}(u),
\end{multline}
where $\nabla_{X}=(X_{1},\ldots,X_{N})$ and
$$\mathcal{C}(u):=\frac{\alpha-2}{2-\beta}\int_{\partial\Omega}
u^{2}\Gamma^{\frac{\alpha-2}{2-\beta}-1}\langle\widetilde{\nabla}\Gamma,
d\nu\rangle-2\int_{\partial\Omega}
\Gamma^{\frac{\alpha-2}{2-\beta}}u\langle\widetilde{\nabla}u,
d\nu\rangle.$$

For $\beta>\alpha>\frac{8-\beta}{3}$, $\beta>2$ and $R\geq\, e\, {\rm sup}_{\Omega}\Gamma^{\frac{1}{2-\beta}}$, we prove the following further generalised local Rellich
inequalities (Theorem \ref{2LRellich}):
\begin{multline}\label{EQ:R3}
\qquad \qquad \int_{\Omega}\frac{\Gamma^{\frac{\alpha}{2-\beta}}}
{|\nabla_{X}\Gamma^{\frac{1}{2-\beta}}|^{2}}|\mathcal{L}u|^{2}d\nu\geq
\frac{(\beta-\alpha)^{2}}{4}\int_{\Omega}\Gamma^{\frac{\alpha-2}{2-\beta}}|\nabla_{X} u|^{2}d\nu
\\+\frac{(\beta+3\alpha-8)(\beta+\alpha-4)(\beta-\alpha)}{8(\beta-2)}\int_{\partial\Omega}
\Gamma^{\frac{\alpha-2}{2-\beta}-1}|u|^{2}
\langle\widetilde{\nabla}\Gamma,
d\nu\rangle\\+
\frac{(\beta+\alpha-4)(\beta-\alpha)}{4}\mathcal{C}(u),
\end{multline}
and
\begin{multline}\label{EQ:R4}
\qquad \qquad \int_{\Omega}\frac{\Gamma^{\frac{\alpha}{2-\beta}}}
{|\nabla_{X}\Gamma^{\frac{1}{2-\beta}}|^{2}}
|\mathcal{L}u|^{2}d\nu\geq \frac{(\beta-\alpha)^{2}}{4}\int_{\Omega}\Gamma^{\frac{\alpha-2}{2-\beta}}|\nabla_{X} u|^{2}d\nu\\+\frac{(\beta+3\alpha-8)(\beta-\alpha)}{16}\int_{\Omega}\Gamma^{\frac{\alpha-4}{2-\beta}}
|\nabla_{X} \Gamma^{\frac{1}{2-\beta}}|^{2}\,
\left({\ln}\frac{R}{\Gamma^{\frac{1}{2-\beta}}}\right)^{-2}|u|^{2}d\nu
\\
+\frac{(\beta+3\alpha-8)(\beta-\alpha)}
{8(\beta-2)}\int_{\partial\Omega}
\Gamma^{\frac{\alpha-2}{2-\beta}-1}\left({\ln}\frac{R}{\Gamma^{\frac{1}{2-\beta}}}\right)^{-1}|u|^{2}
\langle\widetilde{\nabla}\Gamma,
d\nu\rangle
\\+
\frac{(\beta+3\alpha-8)(\beta+\alpha-4)(\beta-\alpha)}{8(\beta-2)}\int_{\partial\Omega}
\Gamma^{\frac{\alpha-2}{2-\beta}-1}|u|^{2}
\langle\widetilde{\nabla}\Gamma,
d\nu\rangle+\frac{(\beta+\alpha-4)(\beta-\alpha)}{4}\mathcal{C}(u),
\end{multline}
where $\nabla_{X}=(X_{1},\ldots,X_{N})$ and
$$\mathcal{C}(u):=\frac{\alpha-2}{2-\beta}\int_{\partial\Omega}
u^{2}\Gamma^{\frac{\alpha-2}{2-\beta}-1}\langle\widetilde{\nabla}\Gamma,
d\nu\rangle-2\int_{\partial\Omega}
\Gamma^{\frac{\alpha-2}{2-\beta}}u\langle\widetilde{\nabla}u,
d\nu\rangle.$$

The classical result by Rellich appearing at the 1954 ICM in Amsterdam
\cite{Rellich:ineq-1956}
stated the inequality
\begin{equation}\label{EQ:Rellich1}
\left\|\frac{f}{|x|^{2}}\right\|_{L^{2}(\mathbb{R}^{n})}\leq \frac{4}{n(n-4)}\|\Delta f\|_{L^{2}(\mathbb{R}^{n})},\quad n\geq 5.
\end{equation}
We refer e.g. to Davies and Hinz
\cite{Davies-Hinz} for history and further extensions,
including the derivation of sharp constants, and to
\cite{Kombe:Rellich-Carnot-2010} and \cite{Lian:Rellich} for the corresponding results for the
sub-Laplacian on homogeneous Carnot groups. Inequalities \eqref{EQ:R1}-\eqref{EQ:R4} provide their refinement and extension, with respect to further interior and boundary terms and weights, with explicit formulae for the appearing constants.

The plan of this paper is as follows. In Section \ref{SEC:2} we derive versions of Green's first and second formulae, and give some of their consequences. In Section \ref{Sec3} we prove a local Hardy inequality and a local uncertainty principle. In Section \ref{SecR} local Rellich
inequalities are studied.
In Section \ref{Sec4} we give examples, in particular showing statements (E1), (E2) and (E3).

\section{Green's formulae for sums of squares and  consequences}
\label{SEC:2}

We have the following analogue of Green's formulae which will be instrumental in the proofs of Hardy and Rellich inequalities.

\begin{prop}[Green's formulae]\label{greenformulae}
Let $\Omega\subset M$ be an admissible domain. Let $v\in C^{1}(\Omega)\bigcap C(\overline{\Omega})$ and $u\in C^{2}(\Omega)\bigcap C^{1}(\overline{\Omega})$. Then
the following analogue of Green's first formula holds:
\begin{equation} \label{g1}
\int_{\Omega}\left((\mathcal{\widetilde{\nabla}}v) u+v\mathcal{L}u\right) d\nu=\int_{\partial\Omega}v\langle \mathcal{\widetilde{\nabla} }u,d\nu\rangle,
\end{equation}
where
\begin{equation} \label{gt}
\mathcal{\widetilde{\nabla} }u=\sum_{k=1}^{N}\left(X_{k}u\right)X_{k}.
\end{equation}
If $u,v\in C^{2}(\Omega)\bigcap C^{1}(\overline{\Omega}),$ then
the following analogue of Green's second formula holds:
\begin{equation}\label{g2}
\int_{\Omega}(u\mathcal{L}v-v\mathcal{L}u)d\nu
=\int_{\partial\Omega}(u\langle\widetilde{\nabla}  v,d\nu\rangle-v\langle \widetilde{\nabla}  u,d\nu\rangle).
\end{equation}
Moreover, if $\Omega$ is strongly admissible, we can put $u=\Gamma$ in \eqref{g1}, and
$u=\Gamma$ or $v=\Gamma$ in \eqref{g2}.
\end{prop}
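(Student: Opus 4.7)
The plan is to derive \eqref{g1} directly from the admissibility identity \eqref{astokes}, and then obtain \eqref{g2} by a symmetry argument. For Green's first formula, I would apply \eqref{astokes} with the choice $f_{k}:=v\,X_{k}u$ for $k=1,\ldots,N$. The hypotheses $v\in C^{1}(\Omega)\cap C(\overline{\Omega})$ and $u\in C^{2}(\Omega)\cap C^{1}(\overline{\Omega})$ ensure each $f_{k}\in C^{1}(\Omega)\cap C(\overline{\Omega})$, so \eqref{astokes} is applicable. Expanding by the Leibniz rule $X_{k}(vX_{k}u)=(X_{k}v)(X_{k}u)+vX_{k}^{2}u$ and summing over $k$ turns the left-hand side of \eqref{astokes} into $\int_{\Omega}\bigl((\widetilde{\nabla}v)u+v\mathcal{L}u\bigr)d\nu$, while the right-hand side simplifies to $\int_{\partial\Omega}v\langle\widetilde{\nabla}u,d\nu\rangle$ after pulling $v$ out of the sum and recognising $\widetilde{\nabla}u=\sum_{k}(X_{k}u)X_{k}$. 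This gives \eqref{g1}.

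For the second Green formula \eqref{g2}, under the stronger hypothesis that both $u$ and $v$ lie in $C^{2}(\Omega)\cap C^{1}(\overline{\Omega})$, I would apply \eqref{g1} twice --- once to the ordered pair $(v,u)$ and once to $(u,v)$ with the roles reversed --- and subtract. The bilinear ``gradient'' term is symmetric, $(\widetilde{\nabla}v)u=\sum_{k}(X_{k}v)(X_{k}u)=(\widetilde{\nabla}u)v$, so it cancels in the difference, leaving exactly \eqref{g2}. The case $u=\Gamma$ in \eqref{g1} is then immediate from strong admissibility, which is precisely \eqref{astokes} for $f_{k}=vX_{k}\Gamma_{y}$; repeating the Leibniz computation with $u$ replaced by $\Gamma$ reproduces \eqref{g1} with $u=\Gamma$, the term $\int_{\Omega}v\,\mathcal{L}\Gamma\,d\nu$ being understood via $\mathcal{L}\Gamma=-\delta_{y}$.

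The remaining assertion --- namely \eqref{g2} with $v=\Gamma$, and its antisymmetric counterpart $u=\Gamma$ --- is the delicate point. To rerun the symmetry argument one would need Green's first formula also with $\Gamma$ in the \emph{outer} slot, i.e.\ \eqref{astokes} for the singular test vector $f_{k}=\Gamma X_{k}u$, which does \emph{not} sit in $C^{1}(\Omega)\cap C(\overline{\Omega})$ when $y\in\Omega$. My plan here is the standard excision argument: work on $\Omega_{\varepsilon}:=\Omega\setminus\overline{B_{\varepsilon}(y)}$, which inherits admissibility and on which $\Gamma$ is smooth and bounded, apply \eqref{astokes} to $f_{k}=\Gamma X_{k}u$ there, and send $\varepsilon\downarrow 0$. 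The interior integrals pass to the limit thanks to the local integrability of $\Gamma$ and $\widetilde{\nabla}\Gamma$ near $y$, and the extra boundary contribution on $\partial B_{\varepsilon}(y)$ concentrates to $-u(y)$, in agreement with the distributional identity $-\int_{\Omega}u\,\mathcal{L}\Gamma\,d\nu=u(y)$ already used in the previous step. Combined with \eqref{g1} for $u=\Gamma$, this gives \eqref{g2} with $v=\Gamma$, and the case $u=\Gamma$ follows by swapping the roles. The one technical check to watch for is that $\Omega_{\varepsilon}$ is indeed admissible for small $\varepsilon$ so that \eqref{astokes} is available on it; this is a local statement near $y$ and can be verified case-by-case in the settings (E1)--(E3).
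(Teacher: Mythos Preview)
Your argument for \eqref{g1} and \eqref{g2} is exactly the paper's: set $f_{k}=vX_{k}u$, expand by Leibniz, invoke \eqref{astokes}, and then subtract the two instances of \eqref{g1} using the symmetry $(\widetilde{\nabla}v)u=(\widetilde{\nabla}u)v$. No differences there.

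Where you diverge is in the treatment of the $\Gamma$ clause. The paper disposes of it in one line: ``If $\Omega$ is strongly admissible, we can put $\Gamma$ for $u$ or $v$ as stated since \eqref{astokes} holds in these cases as well.'' You have noticed, correctly, that Definition~\ref{DEF:adomain} only postulates \eqref{astokes} for $f_{k}=vX_{k}\Gamma_{y}$, which yields \eqref{g1} with $u=\Gamma$ but \emph{not} the companion instance with $\Gamma$ in the outer slot (i.e.\ $f_{k}=\Gamma X_{k}u$) needed to run the subtraction argument for \eqref{g2}. Your excision on $\Omega_{\varepsilon}=\Omega\setminus\overline{B_{\varepsilon}(y)}$ is the standard and correct way to close this gap; the paper simply does not spell it out. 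The residual admissibility of $\Omega_{\varepsilon}$ that you flag is indeed the only point to verify, and in the paper's concrete settings (E1)--(E3) it follows from Proposition~\ref{stokes}, which applies to any bounded domain with piecewise smooth simple boundary, hence in particular to $\Omega$ with a small smooth ball excised. So your proposal is correct and, on this last point, more scrupulous than the paper's own proof.
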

With the notation \eqref{gt}, for functions $u$ and $v$ we can also write
$$\left(\mathcal{\widetilde{\nabla} }v\right) u=\mathcal{\widetilde{\nabla} }v u= \sum_{k=1}^{N}\left(X_{k}v\right)\left(X_{k}u\right)=\sum_{k=1}^{N}
X_{k}v X_{k}u,$$
so that
$$(\mathcal{\widetilde{\nabla} }v) u=(\mathcal{\widetilde{\nabla} }u) v $$
is a scalar.

\begin{proof}[Proof of Proposition \ref{greenformulae}]
Taking $f_{k}=vX_{k}u,$ we get

$$\sum_{k=1}^{N}X_{k}f_{k}=(\mathcal{\widetilde{\nabla} }v) u+v\mathcal{L}u.$$
By the admissibility of $\Omega$, using \eqref{astokes} we obtain
\begin{multline*}
\int_{\Omega}\left(\mathcal{\widetilde{\nabla} }v u+v\mathcal{L}u\right) d\nu$$$$=\int_{\Omega}
\sum_{k=1}^{N}X_{k}f_{k}d\nu \\
=\int_{\partial\Omega}\sum_{k=1}^{N}\langle f_{k}X_{k},d\nu\rangle
=\int_{\partial\Omega}\sum_{k=1}^{N}\langle vX_{k}uX_{k},d\nu\rangle
=\int_{\partial\Omega}v\langle \mathcal{\widetilde{\nabla} }u,d\nu\rangle,
\end{multline*}
completing the proof of Green's first formula \eqref{g1}.
Rewriting \eqref{g1} we have
$$\int_{\Omega}\left((\mathcal{\widetilde{\nabla} }u) v+u\mathcal{L}v\right)d\nu=\int_{\partial\Omega}u\langle \mathcal{\widetilde{\nabla} }v,d\nu\rangle,$$
$$\int_{\Omega}\left((\mathcal{\widetilde{\nabla} }v) u+v\mathcal{L}u\right)d\nu=\int_{\partial\Omega}v\langle \mathcal{\widetilde{\nabla} }u,d\nu\rangle.$$
By subtracting the second identity from the first one
and using $(\mathcal{\widetilde{\nabla} }u) v=(\mathcal{\widetilde{\nabla} }v) u$
we obtain Green's second formula \eqref{g2}.
If $\Omega$ is strongly admissible, we can put $\Gamma$ for $u$ or $v$ as stated since
\eqref{astokes} holds in these cases as well.
\end{proof}

\begin{rem}
It is important that the Green formulae are valid for the fundamental solution
$\Gamma$. In the classical (Euclidean) case, the Green formulae are valid for
the fundamental solution of the Laplacian and this fact can be showed simply. However,
in general, it is not trivial.
\end{rem}

When $v=1$, Proposition \ref{greenformulae} readily implies the following
analogue of Gauss' mean value type formulae:
\begin{cor}\label{COR:v0}
For any admissible domain $\Omega\subset M$, we have
$$\mathcal{L}u\geq 0 \textrm{ in } \Omega \; \Longrightarrow \; \int_{\partial\Omega}\langle \mathcal{\widetilde{\nabla} }u,d\nu\rangle\geq0$$
and
$$\mathcal{L}u\leq 0 \textrm{ in } \Omega \; \Longrightarrow \; \int_{\partial\Omega}\langle \mathcal{\widetilde{\nabla} }u,d\nu\rangle\leq 0.$$
Consequently, we also have
$$\mathcal{L}u= 0 \textrm{ in } \Omega \; \Longrightarrow \; \int_{\partial\Omega}\langle \mathcal{\widetilde{\nabla} }u,d\nu\rangle=0.$$
\end{cor}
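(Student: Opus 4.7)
The plan is to specialize Green's first formula \eqref{g1} from Proposition \ref{greenformulae} to the constant function $v \equiv 1$. The constant $1$ clearly lies in $C^{1}(\Omega) \cap C(\overline{\Omega})$, so the hypothesis on $v$ in Proposition \ref{greenformulae} is satisfied. Since each vector field $X_{k}$ annihilates constants, we have
$$\widetilde{\nabla} v = \sum_{k=1}^{N}(X_{k}v) X_{k} = 0$$
pointwise on $\Omega$, and in particular $(\widetilde{\nabla} v)u \equiv 0$. Substituting this into \eqref{g1} collapses the identity to
$$\int_{\Omega}\mathcal{L}u \, d\nu \;=\; \int_{\partial\Omega}\langle \widetilde{\nabla} u, d\nu\rangle.$$

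Given this one-line identity, each of the three implications follows by inspecting the sign of the left-hand side. If $\mathcal{L}u \geq 0$ on $\Omega$ then the left-hand integral is nonnegative, hence so is the boundary integral on the right; the case $\mathcal{L}u \leq 0$ is symmetric; and when $\mathcal{L}u \equiv 0$ both previous implications apply simultaneously, forcing the boundary integral to vanish.

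There is essentially no obstacle: the corollary is an immediate consequence of Green's first formula, and the only remark worth recording is that only ordinary admissibility of $\Omega$ (in the sense of Definition \ref{DEF:adomain}) is used, since the argument never invokes the fundamental solution $\Gamma$ and so the stronger notion of strong admissibility is unnecessary here. One should also note that $u$ is implicitly required to have the regularity stipulated in Proposition \ref{greenformulae}, namely $u \in C^{2}(\Omega) \cap C^{1}(\overline{\Omega})$, in order for both sides of the derived identity to be well-defined.
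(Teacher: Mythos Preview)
Your proof is correct and follows exactly the approach indicated in the paper, which simply observes that taking $v=1$ in Green's first formula \eqref{g1} yields the identity $\int_{\Omega}\mathcal{L}u\,d\nu=\int_{\partial\Omega}\langle\widetilde{\nabla}u,d\nu\rangle$, from which the three implications are immediate. Your additional remarks on the required regularity of $u$ and the fact that only ordinary (not strong) admissibility is needed are accurate and appropriate.
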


Also, for $x\in\Omega$, taking $v=1$ and $u(y)=\Gamma(x,y)$ in \eqref{g1} we obtain:
\begin{cor}\label{COR:v1}
If $\Omega\subset M$ is a strongly admissible domain
such that $\Omega\subset T_{y}$ for all $y\in\Omega$.
Let $x\in\Omega$. Then we have
$$\int_{\partial\Omega}\langle \mathcal{\widetilde{\nabla} }\Gamma(x,y),d\nu(y)\rangle=-1,$$
where $\mathcal{\widetilde{\nabla} }\Gamma(x,y)=\mathcal{\widetilde{\nabla} }_y\Gamma(x,y)$ refers to the notation
\eqref{gt} with derivatives taken with respect to the variable $y$.
\end{cor}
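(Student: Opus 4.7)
The plan is to apply Green's first formula \eqref{g1} directly, following the hint given just before the corollary statement: take $v\equiv 1$ on $\Omega$ and take $u$ to be the fundamental solution with pole at the fixed point $x\in\Omega$, viewed as a function of the integration variable $y$. The strong admissibility of $\Omega$ with $x$ (which is ensured by the hypothesis $\Omega\subset T_y$ for all $y\in\Omega$, together with $\Omega$ being strongly admissible) is precisely what lets us plug $u=\Gamma_x$ into \eqref{g1}, by the final sentence of Proposition \ref{greenformulae}.

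First I would compute the left-hand side. Since $v\equiv 1$ is constant and each $X_k$ is a derivation, $X_k v=0$, so $\widetilde{\nabla} v=\sum_k (X_k v) X_k=0$ on $\Omega$; this kills the first term in \eqref{g1}. The remaining contribution is $\int_\Omega \mathcal{L}\Gamma_x(y)\, d\nu(y)$, which by the defining property of the fundamental solution in assumption $({\rm A}_x)$ equals $-\int_\Omega \delta_x\, d\nu = -1$, using that $x\in\Omega$. Meanwhile the right-hand side of \eqref{g1} reduces simply to $\int_{\partial\Omega}\langle \widetilde{\nabla}\Gamma_x,d\nu\rangle$ because $v\equiv 1$. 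Equating the two sides gives
\[
\int_{\partial\Omega}\langle \widetilde{\nabla}\Gamma(x,y),d\nu(y)\rangle=-1,
\]
which is the claimed identity.

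There is really no significant obstacle here: the only delicate point is the justification of \eqref{g1} for the singular function $u=\Gamma$, but this is exactly the content of the ``moreover'' clause of Proposition \ref{greenformulae}, already proved. Thus the corollary is an immediate specialisation of Green's first formula to a constant test function and the fundamental solution, where the distributional identity $-\mathcal{L}\Gamma_x=\delta_x$ turns the interior integral into the single value $-1$.
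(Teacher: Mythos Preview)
Your proof is correct and follows exactly the paper's own approach: the corollary is stated there as the immediate consequence of taking $v=1$ and $u(y)=\Gamma(x,y)$ in Green's first formula \eqref{g1}, with strong admissibility justifying the substitution $u=\Gamma$. There is nothing to add.
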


The assumption of $\Omega\subset T_{y}$ for all $y\in\Omega$ above just assures that the family of $\Gamma_{y}$ is defined over $y\in\Omega$.

Assuming the conditions of Corollary \ref{COR:v1},
and putting the fundamental solution $\Gamma$ instead of $v$ in \eqref{g2} we get the following representation formulae that have applications in various boundary value problems but are also of importance on their own.
\begin{itemize}

\item Let $u\in C^{2}(\Omega)\bigcap C^{1}(\overline{\Omega})$. Then for $x\in\Omega$ we have
\begin{multline}\label{rep}
u(x)=-\int_{\Omega}\Gamma(x,y)\mathcal{L}u(y)d\nu(y)\\ -
\int_{\partial\Omega}u(y) \langle\mathcal{\widetilde{\nabla}
}\Gamma(x,y),d\nu(y)\rangle+
\int_{\partial\Omega}\Gamma(x,y)\langle\mathcal{\widetilde{\nabla}
}u(y),d\nu(y)\rangle.
\end{multline}
\item Let $u\in C^{2}(\Omega)\bigcap C^{1}(\overline{\Omega})$ and $\mathcal{L}u=0$ on $\Omega$, then for $x\in\Omega$ we have
 \begin{equation}\label{rep}
u(x)=-\int_{\partial\Omega}u(y) \langle\mathcal{\widetilde{\nabla}
}\Gamma(x,y),d\nu(y)\rangle+ \int_{\partial\Omega}\Gamma(x,y)
\langle\mathcal{\widetilde{\nabla}}u(y),d\nu(y)\rangle.
\end{equation}
\item
Let $u\in C^{2}(\Omega)\bigcap C^{1}(\overline{\Omega})$ and
\begin{equation}\label{D1}
u(x)=0,\,\, x\in\partial\Omega,
\end{equation}
then
\begin{equation}
u(x)=-\int_{\Omega}\Gamma(x,y)\mathcal{L}u(y)d\nu(y)+
\int_{\partial\Omega}\Gamma(x,y)\langle\mathcal{\widetilde{\nabla}
}u(y),d\nu(y)\rangle.
\end{equation}
\item
Let $u\in C^{2}(\Omega)\bigcap C^{1}(\overline{\Omega})$ and
\begin{equation}\label{n1}
\sum_{j=1}^{N}X_{j}u\langle X_{j} ,d\nu\rangle=0 \;\textrm{ on }\; \partial\Omega,
\end{equation}
then
 \begin{equation}\label{rep}
u(x)=-\int_{\Omega}\Gamma(x,y)\mathcal{L}u(y)d\nu(y)-\int_{\partial\Omega}u(y)
\langle\mathcal{\widetilde{\nabla}}\Gamma(x,y),d\nu(y)\rangle.
\end{equation}
\end{itemize}

\section{Local Hardy inequalities and uncertainty principles}
\label{Sec3}

We now present local refined versions of the Hardy inequality with an additional boundary term
on the right hand side.
The proof of Theorem \ref{LHardy} relies on (coordinate free) Green's first formula
that we obtained in Proposition \ref{greenformulae}.

\begin{thm}\label{LHardy} Let $y\in M$ be such that {\rm (${\rm A}_{y}^{+}$)} holds with the fundamental solution $\Gamma=\Gamma_{y}$ in $T_{y}$.
Let $\Omega\subset T_{y}$ be a strongly admissible domain, $y\not\in\partial\Omega,$
$\alpha\in \mathbb{R}$,\ $\alpha>2-\beta$, $\beta>2$ and $R\geq\, e\, {\rm sup}_{\Omega}\Gamma^{\frac{1}{2-\beta}}$. Let
$u\in C^{1}(\Omega)\bigcap C(\overline{\Omega})$.
 Then the following generalised local Hardy
inequalities are valid:
\begin{multline}\label{LH2a}
\qquad \qquad \int_{\Omega}\Gamma^{\frac{\alpha}{2-\beta}}
|\nabla_{X} u|^{2} \,d\nu\geq
\left(\frac{\beta+\alpha-2}{2}\right)^{2}\int_{\Omega}
\Gamma^{\frac{\alpha-2}{2-\beta}} |\nabla_{X} \Gamma^{\frac{1}{2-\beta}}|^{2}
|u|^{2}\,d\nu\\+
\frac{\beta+\alpha-2}{2(\beta-2)}\int_{\partial\Omega}
\Gamma^{\frac{\alpha}{2-\beta}-1}|u|^{2}
\langle\widetilde{\nabla}\Gamma,
d\nu\rangle,
\end{multline}
and its further refinement
\begin{multline}\label{LH2}
\qquad \qquad \int_{\Omega}\Gamma^{\frac{\alpha}{2-\beta}}
|\nabla_{X} u|^{2} \,d\nu\geq\\
\left(\frac{\beta+\alpha-2}{2}\right)^{2}\int_{\Omega}
\Gamma^{\frac{\alpha-2}{2-\beta}} |\nabla_{X} \Gamma^{\frac{1}{2-\beta}}|^{2}
|u|^{2}\,d\nu+\frac{1}{4}\int_{\Omega}\Gamma^{\frac{\alpha-2}{2-\beta}}
|\nabla_{X} \Gamma^{\frac{1}{2-\beta}}|^{2}\,
\left({\ln}\frac{R}{\Gamma^{\frac{1}{2-\beta}}}\right)^{-2}|u|^{2}d\nu
\\
+\frac{1}{2(\beta-2)}\int_{\partial\Omega}
\Gamma^{\frac{\alpha}{2-\beta}-1}\left({\ln}\frac{R}{\Gamma^{\frac{1}{2-\beta}}}\right)^{-1}|u|^{2}
\langle\widetilde{\nabla}\Gamma,
d\nu\rangle+
\frac{\beta+\alpha-2}{2(\beta-2)}\int_{\partial\Omega}
\Gamma^{\frac{\alpha}{2-\beta}-1}|u|^{2}
\langle\widetilde{\nabla}\Gamma,
d\nu\rangle,
\end{multline}
where $\nabla_{X}=(X_{1},\ldots,X_{N})$.
\end{thm}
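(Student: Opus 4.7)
The plan is to reduce everything to the distance-like variable $\varphi:=\Gamma^{1/(2-\beta)}$, which is positive on $T_y\setminus\{y\}$ since $\Gamma>0$ there by $({\rm A}_y^+)$ and $2-\beta<0$. Writing $\widetilde{\nabla}\varphi=\tfrac{1}{2-\beta}\varphi^{\beta-1}\widetilde{\nabla}\Gamma$ converts the boundary factor $\Gamma^{\alpha/(2-\beta)-1}\langle\widetilde{\nabla}\Gamma,d\nu\rangle$ into $(2-\beta)\varphi^{\alpha-1}\langle\widetilde{\nabla}\varphi,d\nu\rangle$, so that with $c:=(\alpha+\beta-2)/2>0$ the target inequality \eqref{LH2a} becomes
\[
\int_\Omega\varphi^\alpha|\nabla_X u|^2\,d\nu\geq c^2\int_\Omega\varphi^{\alpha-2}|\nabla_X\varphi|^2 u^2\,d\nu-c\int_{\partial\Omega}\varphi^{\alpha-1}u^2\langle\widetilde{\nabla}\varphi,d\nu\rangle,
\]
and similarly for \eqref{LH2}. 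The one local computation I shall need is
$\mathcal{L}\varphi=(\beta-1)\varphi^{-1}|\nabla_X\varphi|^2$ on $T_y\setminus\{y\}$, obtained from $\mathcal{L}\Gamma=0$ there via the chain rule applied to $\Gamma=\varphi^{2-\beta}$.

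First I would derive a divergence identity parametrised by a smooth scalar function $\mu=\mu(\varphi)$: applying \eqref{astokes} to $f_k=(c+\mu(\varphi))\varphi^{\alpha-1}(X_k\varphi)u^2$ and computing $\sum_kX_kf_k$, the contribution $\mu\,\varphi^{\alpha-1}u^2\mathcal{L}\varphi=\mu(\beta-1)\varphi^{\alpha-2}|\nabla_X\varphi|^2u^2$ combines with the $(\alpha-1)$ coming from differentiating $\varphi^{\alpha-1}$ to produce the clean factor $(\alpha+\beta-2)=2c$, yielding
\begin{multline*}
\int_\Omega\mu'(\varphi)\varphi^{\alpha-1}|\nabla_X\varphi|^2u^2\,d\nu+2c\int_\Omega(c+\mu)\varphi^{\alpha-2}|\nabla_X\varphi|^2u^2\,d\nu\\+2\int_\Omega(c+\mu)\varphi^{\alpha-1}u\,\nabla_X u\cdot\nabla_X\varphi\,d\nu=\int_{\partial\Omega}(c+\mu)\varphi^{\alpha-1}u^2\langle\widetilde{\nabla}\varphi,d\nu\rangle.
\end{multline*}

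Next I would expand the trivial bound $0\leq\int_\Omega\varphi^{\alpha}\bigl|\nabla_Xu+(c+\mu(\varphi))u\varphi^{-1}\nabla_X\varphi\bigr|^2\,d\nu$ and use the identity above to eliminate the cross term $\int\varphi^{\alpha-1}u\nabla_Xu\cdot\nabla_X\varphi$; the $u^2$-coefficients collapse via $2c(c+\mu)-(c+\mu)^2=c^2-\mu^2$, giving
\[
\int_\Omega\varphi^\alpha|\nabla_Xu|^2\,d\nu\geq c^2\int_\Omega\varphi^{\alpha-2}|\nabla_X\varphi|^2u^2\,d\nu+\int_\Omega\bigl[\mu'(\varphi)\varphi-\mu^2\bigr]\varphi^{\alpha-2}|\nabla_X\varphi|^2u^2\,d\nu-\int_{\partial\Omega}(c+\mu)\varphi^{\alpha-1}u^2\langle\widetilde{\nabla}\varphi,d\nu\rangle.
\]
The choice $\mu\equiv 0$ gives \eqref{LH2a} at once. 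For the refinement \eqref{LH2}, I would pick $\mu(\varphi)=\tfrac12(\ln(R/\varphi))^{-1}$, which satisfies $\mu'(\varphi)\varphi=2\mu^2$; hence the middle bracket reduces to $\mu^2=\tfrac14(\ln(R/\varphi))^{-2}$, producing the extra logarithmic interior weight, and the boundary remainder splits into the two pieces written in \eqref{LH2}. The hypothesis $R\geq e\sup_\Omega\varphi$ ensures $\ln(R/\varphi)\geq 1$, keeping $\mu$ bounded and all new terms finite.

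The main obstacle I anticipate is the justification of \eqref{astokes} when $y\in\Omega$: then $\varphi(y)=0$, so $f_k$ is singular at $y$ and strong admissibility of $\Omega$ does not apply to this $f_k$ directly. I would handle this by working on $\Omega_\epsilon:=\Omega\setminus\{\varphi\leq\epsilon\}$, where $f_k\in C^1(\overline{\Omega_\epsilon})$ and the preceding manipulations are lawful, and then pass to the limit $\epsilon\to 0^+$. The extra boundary contribution on the level set $\{\varphi=\epsilon\}$ is of order $\epsilon^{\alpha-1}\int_{\{\varphi=\epsilon\}}u^2\langle\widetilde{\nabla}\varphi,d\nu\rangle=O(\epsilon^{\alpha+\beta-2})$, using $\widetilde{\nabla}\varphi=\tfrac{\varphi^{\beta-1}}{2-\beta}\widetilde{\nabla}\Gamma$ and the normalisation $\int_{\{\varphi=\epsilon\}}\langle\widetilde{\nabla}\Gamma,d\nu\rangle=\pm 1$ from Corollary~\ref{COR:v1}; this vanishes as $\epsilon\to 0^+$ precisely because of the hypothesis $\alpha>2-\beta$, which is what makes the whole scheme work.
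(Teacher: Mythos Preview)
Your approach is correct and yields both inequalities, but it is organised differently from the paper's proof. The paper proceeds by the ground-state substitution $u=d^{\gamma}q$ with $d=\Gamma^{1/(2-\beta)}$ and $\gamma=(2-\beta-\alpha)/2$, expands $|\nabla_X u|^2$, integrates the cross term by parts, and then drops the nonnegative remainder $\int d^{\alpha+2\gamma}|\nabla_X q|^2$; for the refinement it performs a \emph{second} substitution $q=(\ln(R/d))^{1/2}\psi$ and repeats. Your completing-the-square with the free profile $\mu$ is the same mechanism (indeed $\varphi^{\alpha}|\nabla_X u+(c+\mu)u\varphi^{-1}\nabla_X\varphi|^2=\varphi^{-2\gamma}|\nabla_X q|^2$ when $\mu=0$), but packaging it as one master identity with $\mu'\varphi-\mu^2$ as the bonus weight is tidier: both \eqref{LH2a} and \eqref{LH2} drop out by choosing $\mu\equiv 0$ or $\mu=\tfrac12(\ln(R/\varphi))^{-1}$, whereas the paper runs two substitution passes.

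One point worth aligning with the paper: your excision $\Omega_\epsilon=\Omega\setminus\{\varphi\le\epsilon\}$ requires (ordinary) admissibility of $\Omega_\epsilon$, which the abstract hypotheses of the theorem do not provide. The paper sidesteps this: the choice of $\gamma$ is made precisely so that $d^{\alpha+2\gamma}=\Gamma$, and then the only integration by parts used is $\int_\Omega(\widetilde\nabla q^2)\,\Gamma=-\int_\Omega q^2\,\mathcal{L}\Gamma+\int_{\partial\Omega}q^2\langle\widetilde\nabla\Gamma,d\nu\rangle$, which is exactly the strong-admissibility formula for $\Omega$ itself, with $\int_\Omega q^2\,\mathcal{L}\Gamma=-q^2(y)=0$ since $q^2=\varphi^{\alpha+\beta-2}u^2$ vanishes at $y$. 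Your computation fits the same mould: since $X_k\varphi=\tfrac{1}{2-\beta}\varphi^{\beta-1}X_k\Gamma$, your $f_k=(c+\mu)\varphi^{\alpha-1}(X_k\varphi)u^2=\tfrac{c+\mu}{2-\beta}\,\varphi^{\alpha+\beta-2}u^2\,X_k\Gamma$ is already of the form $vX_k\Gamma$, so you can invoke strong admissibility of $\Omega$ directly rather than excise, matching the paper's handling of the singular point and using only the hypothesis actually assumed.
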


Here and in the sequel we formulate two versions of the appearing inequalities. For both Hardy and Rellich inequalities, the first one is formulated as an improvement of the classical inequality by inclusion of a boundary term - it reduces to the `classical' one for functions $u$ vanishing on the boundary $\partial\Omega$. The second inequality in each theorem provides for a further refinement by including further positive interior terms and well as further boundary terms.

In \eqref{LH2a}  the boundary term can be positive (see
 \cite[Remark
7.2]{Ruzhansky-Suragan:Carnot groups} for a discussion and examples):
\begin{equation}\label{EQ:btn}
\frac{\beta+\alpha-2}{2(\beta-2)}\int_{\partial\Omega}\Gamma^{\frac{\alpha}{2-\beta}-1}|u|^{2}
\langle\widetilde{\nabla}\Gamma,
d\nu\rangle\geq 0,
\end{equation}
for some $u$, hence it can be referred as a local Hardy inequality. Moreover, for ${\rm supp}\, u\subset\Omega$ the inequality \eqref{LH2} gives a refinement to the local Hardy inequality (cf. discussions on stratified Lie groups, for example, \cite[Theorem 3.4]{Kombe:Rellich-Carnot-2010}).

\begin{proof}[Proof of Theorem \ref{LHardy}]
Proof of \eqref{LH2a}. Without loss of generality we can assume that $u$ is real-valued.
In this case, recalling that $(\widetilde{\nabla}u)u=\sum_{k=1}^{N} (X_{k}u)X_{k}u=|\nabla_{X}u|^{2}$, inequality \eqref{LH2a}
reduces to
\begin{multline}\label{LHa}
\qquad \qquad \int_{\Omega}\Gamma^{\frac{\alpha}{2-\beta}}
(\widetilde{\nabla}u)u\,d\nu\geq
\left(\frac{\beta+\alpha-2}{2}\right)^{2}\int_{\Omega}
\Gamma^{\frac{\alpha-2}{2-\beta}}
(\widetilde{\nabla}\Gamma^{\frac{1}{2-\beta}})\Gamma^{\frac{1}{2-\beta}}
u^{2}\,d\nu\\+
\frac{\beta+\alpha-2}{2(\beta-2)}\int_{\partial\Omega}
\Gamma^{\frac{\alpha}{2-\beta}-1}u^{2}
\langle\widetilde{\nabla}\Gamma,
d\nu\rangle,
\end{multline}
which we will now prove. Setting $u=d^{\gamma}q$ for some
real-valued functions $d>0$, $q$, and a constant $\gamma\not=0$ to
be chosen later, we have
$$(\widetilde{\nabla}u)u=
(\widetilde{\nabla}d^{\gamma}q)d^{\gamma}q=\sum_{k=1}^{N}X_{k}(d^{\gamma}q)X_{k}(d^{\gamma}q)$$
$$=\gamma^{2}d^{2\gamma-2}\sum_{k=1}^{N}(X_{k}d)^{2}q^{2}+2\gamma d^{2\gamma-1}q\sum_{k=1}^{N}X_{k}d\,X_{k}q+d^{2\gamma}
\sum_{k=1}^{N}(X_{k}q)^{2}$$
$$=\gamma^{2}d^{2\gamma-2}((\widetilde{\nabla}d)d)q^{2}+2\gamma d^{2\gamma-1}q(\widetilde{\nabla}d)q+d^{2\gamma}
(\widetilde{\nabla}q)q.$$
Multiplying both sides of the above equality by $d^{\alpha}$ and applying Green's first formula (see Proposition \ref{greenformulae})
to the second term in the last line we observe that
\begin{multline*}
2\gamma \int_\Omega d^{\alpha+2\gamma-1}q(\widetilde{\nabla}d)q d\nu=
\frac{\gamma}{\alpha+2\gamma}\int_\Omega (\widetilde{\nabla}d^{\alpha+2\gamma})q^2 d\nu =
\frac{\gamma}{\alpha+2\gamma}\int_\Omega (\widetilde{\nabla}q ^2) d^{\alpha+2\gamma}d\nu \\
=-\frac{\gamma}{\alpha+2\gamma}\int_\Omega q^2 \mathcal{L} d^{\alpha+2\gamma}d\nu+
\frac{\gamma}{\alpha+2\gamma}\int_{\partial \Omega} q^2 \langle \widetilde{\nabla} d^{\alpha+2\gamma},d\nu\rangle,
\end{multline*}
where we note that later on we will choose $\gamma$ so that
$d^{\alpha+2\gamma}=\Gamma$, and so Proposition \ref{greenformulae} is applicable.
Consequently, we get
\begin{multline}\label{H1a}
\int_{\Omega}d^{\alpha}(\widetilde{\nabla}u)ud\nu=
\gamma^{2}
\int_{\Omega}d^{\alpha+2\gamma-2}((\widetilde{\nabla}d)d)\,q^{2}d\nu
+ \frac{\gamma}{\alpha+2\gamma}\int_{\Omega}(\widetilde{\nabla}d^{\alpha+2\gamma})q^{2}
d\nu\\+\int_{\Omega}d^{\alpha+2\gamma}(\widetilde{\nabla}q)qd\nu
 =
\gamma^{2}
\int_{\Omega}d^{\alpha+2\gamma-2}((\widetilde{\nabla}d)d)\,q^{2}d\nu+
\frac{\gamma}{\alpha+2\gamma}\int_{\partial\Omega}q^{2}
\langle\widetilde{\nabla}d^{\alpha+2\gamma},
d\nu\rangle
\\-\frac{\gamma}{\alpha+2\gamma}\int_{\Omega}q^{2}\mathcal{L}d^{\alpha+2\gamma}
d\nu+\int_{\Omega}d^{\alpha+2\gamma}(\widetilde{\nabla}q)qd\nu
\geq
\gamma^{2}\int_{\Omega}d^{\alpha+2\gamma-2}((\widetilde{\nabla}d)d)
\,q^{2}d\nu\\+
\frac{\gamma}{\alpha+2\gamma}\int_{\partial\Omega}q^{2}
\langle\widetilde{\nabla}d^{\alpha+2\gamma},
d\nu\rangle-\frac{\gamma}{\alpha+
2\gamma}\int_{\Omega}q^{2}\mathcal{L}d^{\alpha+2\gamma}d\nu,
\end{multline}
since $d>0$ and $(\widetilde{\nabla}q)q=|\nabla_{X}q|^{2}\geq 0.$
On the other hand, it can be readily checked that for a vector
field $X$ we have
\begin{multline*}
\frac{\gamma}{\alpha+2\gamma} X^2(d^{\alpha+2\gamma})=
\gamma X(d^{\alpha+2\gamma-1}Xd)=
\frac{\gamma}{2-\beta} X(d^{\alpha+2\gamma+\beta-2}X(d^{2-\beta})) \\
=\frac{\gamma}{2-\beta}(\alpha+2\gamma+\beta-2)d^{\alpha+2\gamma+\beta-3}(Xd) X(d^{2-\beta})+
\frac{\gamma}{2-\beta} d^{\alpha+2\gamma+\beta-2}X^2(d^{2-\beta}) \\
=\gamma (\alpha+2\gamma+\beta-2)d^{\alpha+2\gamma-2}(Xd)^2 +
\frac{\gamma}{2-\beta} d^{\alpha+2\gamma+\beta-2}X^2(d^{2-\beta}).
\end{multline*}
Consequently, we get the equality
\begin{equation}\label{H2a}
-\frac{\gamma}{\alpha+2\gamma}\mathcal{L}d^{\alpha+2\gamma}=
-\gamma(\alpha+2\gamma+\beta-2)d^{\alpha+2\gamma-2}(\widetilde{\nabla}d)d
-\frac{\gamma}{2-\beta}d^{\alpha+2\gamma+\beta-2}\mathcal{L}d^{2-\beta}.
\end{equation}
Since $q^{2}=d^{-2\gamma}u^{2},$ substituting \eqref{H2a} into \eqref{H1a} we obtain

$$\int_{\Omega}d^{\alpha}(\widetilde{\nabla}u)ud\nu\geq (-\gamma^{2}-\gamma(\alpha+\beta-2))\int_{\Omega}d^{\alpha-2}((\widetilde{\nabla}d)d) u^{2}d\nu$$
$$-\frac{\gamma}{2-\beta}\int_{\Omega}(\mathcal{L}d^{2-\beta})d^{\alpha+\beta-2}u^{2}dx
+
\frac{\gamma}{\alpha+2\gamma}\int_{\partial\Omega}d^{-2\gamma}u^{2}
\langle\widetilde{\nabla}d^{\alpha+2\gamma},
d\nu\rangle.$$
Taking $d=\Gamma^{\frac{1}{2-\beta}},\;\beta>2,$ concerning the second term we observe that
\begin{equation}\label{Lzero}
\int_{\Omega}(\mathcal{L}\Gamma)\Gamma^{\frac{\alpha+\beta-2}{2-\beta}}u^{2}dx=0,
\;\alpha>2-\beta,\;\beta>2,
\end{equation}
since $\Gamma=\Gamma_y$ is the fundamental solution to $\mathcal L$.
The above equality is clear when $y$ is outside of $\Omega$.
If $y$ belongs to $\Omega$ we have
$$\int_{\Omega}(\mathcal{L}\Gamma)\Gamma^{\frac{\alpha+\beta-2}{2-\beta}}u^{2}dx=
\Gamma^{\frac{\alpha+\beta-2}{2-\beta}}(y)u^{2}(y)=0,
\;\alpha>2-\beta,\;\beta>2,$$
since $\frac{\alpha+\beta-2}{2-\beta}<0$ and $\frac{1}{\Gamma}(y)=0$ by {\rm (${\rm A}_{y}^{+}$)}.
Thus, with $d=\Gamma^{\frac{1}{2-\beta}},\;\beta>2,$ we obtain
\begin{multline}
\int_{\Omega}\Gamma^{\frac{\alpha}{2-\beta}}
(\widetilde{\nabla}u)u\,d\nu\geq
(-\gamma^{2}-\gamma(\alpha+\beta-2)) \int_{\Omega}
\Gamma^{\frac{\alpha-2}{2-\beta}}
(\widetilde{\nabla}\Gamma^{\frac{1}{2-\beta}})\Gamma^{\frac{1}{2-\beta}}
u^{2}\,d\nu
\\
+\frac{\gamma}{\alpha+2\gamma}
\int_{\partial\Omega}\Gamma^{-\frac{2\gamma}{2-\beta}}u^{2}
\langle\widetilde{\nabla}\Gamma^{\frac{\alpha+2\gamma}{2-\beta}},
d\nu\rangle.
\end{multline}
Taking
$\gamma=\frac{2-\beta-\alpha}{2},$
we obtain \eqref{LHa}. Finally, we note that with this $\gamma$, we have
$d^{\alpha+2\gamma}=\Gamma$, so that the use of
Proposition \ref{greenformulae} is justified.

Proof of \eqref{LH2} is similar to the above proof of \eqref{LH2a}. Recalling that $$(\widetilde{\nabla}u)u=\sum_{k=1}^{N} (X_{k}u)X_{k}u=|\nabla_{X}u|^{2},$$ inequality \eqref{LH2}
reduces to
\begin{multline}\label{LH8a}
\qquad \qquad \int_{\Omega}\Gamma^{\frac{\alpha}{2-\beta}}
(\widetilde{\nabla}u)u\,d\nu\geq
\left(\frac{\beta+\alpha-2}{2}\right)^{2}\int_{\Omega}
\Gamma^{\frac{\alpha-2}{2-\beta}}
(\widetilde{\nabla}\Gamma^{\frac{1}{2-\beta}})\Gamma^{\frac{1}{2-\beta}}
u^{2}\,d\nu\\+\frac{1}{4}\int_{\Omega}\Gamma^{\frac{\alpha-2}{2-\beta}}
(\widetilde{\nabla}\Gamma^{\frac{1}{2-\beta}})\Gamma^{\frac{1}{2-\beta}}\,
\left({\ln}\frac{R}{\Gamma^{\frac{1}{2-\beta}}}\right)^{-2}u^{2}d\nu
\\
+\frac{1}{2(\beta-2)}\int_{\partial\Omega}
\Gamma^{\frac{\alpha}{2-\beta}-1}\left({\ln}\frac{R}{\Gamma^{\frac{1}{2-\beta}}}\right)^{-1}u^{2}
\langle\widetilde{\nabla}\Gamma,
d\nu\rangle
\\+
\frac{\beta+\alpha-2}{2(\beta-2)}\int_{\partial\Omega}
\Gamma^{\frac{\alpha}{2-\beta}-1}u^{2}
\langle\widetilde{\nabla}\Gamma,
d\nu\rangle,
\end{multline}
which we will now prove.
Let us recall the first part of \eqref{H1a} as
\begin{multline}\label{H1}
\int_{\Omega}d^{\alpha}(\widetilde{\nabla}u)ud\nu=
\gamma^{2}
\int_{\Omega}d^{\alpha+2\gamma-2}((\widetilde{\nabla}d)d)\,q^{2}d\nu
+ \frac{\gamma}{\alpha+2\gamma}\int_{\Omega}(\widetilde{\nabla}d^{\alpha+2\gamma})q^{2}
d\nu\\+\int_{\Omega}d^{\alpha+2\gamma}(\widetilde{\nabla}q)qd\nu
 =
\gamma^{2}
\int_{\Omega}d^{\alpha+2\gamma-2}((\widetilde{\nabla}d)d)\,q^{2}d\nu+
\frac{\gamma}{\alpha+2\gamma}\int_{\partial\Omega}q^{2}
\langle\widetilde{\nabla}d^{\alpha+2\gamma},
d\nu\rangle
\\-\frac{\gamma}{\alpha+2\gamma}\int_{\Omega}q^{2}\mathcal{L}d^{\alpha+2\gamma}
d\nu+\int_{\Omega}d^{\alpha+2\gamma}(\widetilde{\nabla}q)qd\nu.
\end{multline}
Since $q^{2}=d^{-2\gamma}u^{2},$ substituting \eqref{H2a} into \eqref{H1} we obtain
\begin{multline*}\int_{\Omega}d^{\alpha}(\widetilde{\nabla}u)ud\nu= (-\gamma^{2}-\gamma(\alpha+\beta-2))\int_{\Omega}d^{\alpha-2}((\widetilde{\nabla}d)d) u^{2}d\nu\\-\frac{\gamma}{2-\beta}\int_{\Omega}(\mathcal{L}d^{2-\beta})d^{\alpha+\beta-2}u^{2}dx
+
\frac{\gamma}{\alpha+2\gamma}\int_{\partial\Omega}d^{-2\gamma}u^{2}
\langle\widetilde{\nabla}d^{\alpha+2\gamma},
d\nu\rangle+\int_{\Omega}d^{\alpha+2\gamma}(\widetilde{\nabla}q)qd\nu.
\end{multline*}
Using \eqref{Lzero}, with $d=\Gamma^{\frac{1}{2-\beta}},\;\beta>2,$ we obtain
\begin{multline}
\int_{\Omega}\Gamma^{\frac{\alpha}{2-\beta}}
(\widetilde{\nabla}u)u\,d\nu=
(-\gamma^{2}-\gamma(\alpha+\beta-2)) \int_{\Omega}
\Gamma^{\frac{\alpha-2}{2-\beta}}
(\widetilde{\nabla}\Gamma^{\frac{1}{2-\beta}})\Gamma^{\frac{1}{2-\beta}}
u^{2}\,d\nu
\\
+\frac{\gamma}{\alpha+2\gamma}
\int_{\partial\Omega}\Gamma^{-\frac{2\gamma}{2-\beta}}u^{2}
\langle\widetilde{\nabla}\Gamma^{\frac{\alpha+2\gamma}{2-\beta}},
d\nu\rangle+\int_{\Omega}d^{\alpha+2\gamma}(\widetilde{\nabla}q)qd\nu.
\end{multline}
Taking
$\gamma=\frac{2-\beta-\alpha}{2},$
we obtain
\begin{multline}\label{lterm}
\qquad \qquad \int_{\Omega}\Gamma^{\frac{\alpha}{2-\beta}}
(\widetilde{\nabla}u)u\,d\nu=
\left(\frac{\beta+\alpha-2}{2}\right)^{2}\int_{\Omega}
\Gamma^{\frac{\alpha-2}{2-\beta}}
(\widetilde{\nabla}\Gamma^{\frac{1}{2-\beta}})\Gamma^{\frac{1}{2-\beta}}
u^{2}\,d\nu\\+
\frac{\beta+\alpha-2}{2(\beta-2)}\int_{\partial\Omega}
\Gamma^{\frac{\alpha}{2-\beta}-1}u^{2}
\langle\widetilde{\nabla}\Gamma,
d\nu\rangle+\int_{\Omega}\Gamma(\widetilde{\nabla}q)qd\nu.
\end{multline}
Let us take $q=\left({\ln}\frac{R}{\Gamma^{\frac{1}{2-\beta}}}\right)^{\frac{1}{2}}\varphi$, that is,
$\varphi=\left({\ln}\frac{R}{\Gamma^{\frac{1}{2-\beta}}}\right)^{-\frac{1}{2}}\Gamma^{-\frac{2-\beta-\alpha}{2(2-\beta)}}u.$
A straightforward computation shows that
\begin{multline}\label{llterm}\int_{\Omega}\Gamma(\widetilde{\nabla}q)qd\nu=
\sum_{j=1}^{N}\int_{\Omega}\Gamma\left(X_{j}\left({\ln}\frac{R}{\Gamma^{\frac{1}{2-\beta}}}\right)^{\frac{1}{2}}\,\varphi+ \left({\ln}\frac{R}{\Gamma^{\frac{1}{2-\beta}}}\right)^{\frac{1}{2}}\,X_{j}\varphi\right)^{2}d\nu
\\
=
\frac{1}{4}\int_{\Omega}\Gamma^{\frac{-\beta}{2-\beta}}(\widetilde{\nabla}\Gamma^{\frac{1}{2-\beta}})
\Gamma^{\frac{1}{2-\beta}}\,
\left({\ln}\frac{R}{\Gamma^{\frac{1}{2-\beta}}}\right)^{-1}\varphi^{2}d\nu-
\int_{\Omega}\Gamma^{1-\frac{1}{2-\beta}}\varphi(\widetilde{\nabla}\Gamma^{\frac{1}{2-\beta}})\varphi d\nu\\
+\int_{\Omega}\Gamma\,
{\ln}\frac{R}{\Gamma^{\frac{1}{2-\beta}}}(\widetilde{\nabla}\varphi)\varphi d\nu
\\
=
\frac{1}{4}\int_{\Omega}\Gamma^{\frac{-\beta}{2-\beta}}(\widetilde{\nabla}\Gamma^{\frac{1}{2-\beta}})\Gamma^{\frac{1}{2-\beta}}\,
\left({\ln}\frac{R}{\Gamma^{\frac{1}{2-\beta}}}\right)^{-1}\varphi^{2}d\nu+\frac{1}{2(\beta-2)}
\int_{\Omega}(\widetilde{\nabla}\Gamma)\varphi^{2}d\nu\\
+\int_{\Omega}\Gamma\,
{\ln}\frac{R}{\Gamma^{\frac{1}{2-\beta}}}(\widetilde{\nabla}\varphi)\varphi d\nu
=
\frac{1}{4}\int_{\Omega}\Gamma^{\frac{-\beta}{2-\beta}}(\widetilde{\nabla}\Gamma^{\frac{1}{2-\beta}})\Gamma^{\frac{1}{2-\beta}}\,
\left({\ln}\frac{R}{\Gamma^{\frac{1}{2-\beta}}}\right)^{-1}\varphi^{2}d\nu\\
+\frac{1}{2(\beta-2)}\int_{\Omega}\mathcal{L}\Gamma\,\varphi^{2}d\nu+\frac{1}{2(\beta-2)}\int_{\partial\Omega}
\varphi^{2}
\langle\widetilde{\nabla}\Gamma,
d\nu\rangle+
\int_{\Omega}\Gamma\,
{\ln}\frac{R}{\Gamma^{\frac{1}{2-\beta}}}(\widetilde{\nabla}\varphi)\varphi d\nu.
\end{multline}
Since the second integral term of the right hand side vanishes  and the last integral term is positive from \eqref{llterm} we obtain that
\begin{multline}\label{lllterm}\int_{\Omega}\Gamma(\widetilde{\nabla}q)qd\nu\geq
\frac{1}{4}\int_{\Omega}\Gamma^{\frac{-\beta}{2-\beta}}(\widetilde{\nabla}\Gamma^{\frac{1}{2-\beta}})\Gamma^{\frac{1}{2-\beta}}\,
\left({\ln}\frac{R}{\Gamma^{\frac{1}{2-\beta}}}\right)^{-1}\varphi^{2}d\nu+\frac{1}{2(\beta-2)}\int_{\partial\Omega}
\varphi^{2}
\langle\widetilde{\nabla}\Gamma,
d\nu\rangle\\
=\frac{1}{4}\int_{\Omega}\Gamma^{\frac{\alpha-2}{2-\beta}}
(\widetilde{\nabla}\Gamma^{\frac{1}{2-\beta}})\Gamma^{\frac{1}{2-\beta}}\,
\left({\ln}\frac{R}{\Gamma^{\frac{1}{2-\beta}}}\right)^{-2}u^{2}d\nu
\\+\frac{1}{2(\beta-2)}\int_{\partial\Omega}
\Gamma^{\frac{\alpha}{2-\beta}-1}\left({\ln}\frac{R}{\Gamma^{\frac{1}{2-\beta}}}\right)^{-1}u^{2}
\langle\widetilde{\nabla}\Gamma,
d\nu\rangle
\end{multline}
Finally, \eqref{lterm} and \eqref{lllterm} imply \eqref{LH8a}.
\end{proof}

Even if $y\in\partial\Omega$, the statements of Theorem \ref{LHardy} remain true if
$y\not\in\partial\Omega\cap{\rm supp }\, u$.
Theorem \ref{LHardy} implies the following local
uncertainly principles:
\begin{cor}[Uncertainly principle on $\Omega$]\label{Luncertaintya}
Let $y\in M$ be such that {\rm (${\rm A}_{y}^{+}$)} holds with the fundamental solution $\Gamma=\Gamma_{y}$ in $T_{y}$.
Let $\Omega\subset T_{y}$ be an admissible domain and let $u\in C^{1}(\Omega)\bigcap C(\overline{\Omega})$.
Then for $\beta>2$ we have
\begin{multline}\label{UP1a}
\int_{\Omega}\Gamma^{\frac{2}{2-\beta}}|\nabla_{X}\Gamma^{\frac{1}{2-\beta}}|
^{2} |u|^{2}d\nu \int_{\Omega}|\nabla_{X}u|^{2}d\nu
\geq\left(\frac{\beta-2}{2}\right)^{2}\left(\int_{\Omega}
|\nabla_{X}\Gamma^{\frac{1}{2-\beta}}|^{2} |u|^{2}
d\nu\right)^{2}\\+ \frac{1}{2}
\int_{\partial\Omega}\Gamma^{-1}|u|^{2}
\langle\widetilde{\nabla}\Gamma,
d\nu\rangle\int_{\Omega}\Gamma^{\frac{2}{2-\beta}}
|\nabla_{X}\Gamma^{\frac{1}{2-\beta}}|^{2} |u|^{2}d\nu,
\end{multline}
and also
\begin{multline}\label{UP2a}
\int_{\Omega}\frac{\Gamma^{\frac{2}{2-\beta}}}{|\nabla_{X}\Gamma^{\frac{1}{2-\beta}}|
^{2}}|u|^{2}d\nu\int_{\Omega}|\nabla_{X}u|^{2}d\nu
\geq\left(\frac{\beta-2}{2}\right)^{2}\left(\int_{\Omega}|u|^{2}d\nu\right)^{2}
\\+
\frac{1}{2} \int_{\partial\Omega}\Gamma^{-1}|u|^{2}
\langle\widetilde{\nabla}\Gamma,
d\nu\rangle\,\int_{\Omega}\frac{\Gamma^{\frac{2}{2-\beta}}}
{|\nabla_{X}\Gamma^{\frac{1}{2-\beta}}|^{2}}|u|^{2}d\nu.
\end{multline}
\end{cor}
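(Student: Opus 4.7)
The plan is to derive both uncertainty principles from the local Hardy inequality \eqref{LH2a} of Theorem \ref{LHardy} specialised to $\alpha=0$, combined with an application of the Cauchy--Schwarz inequality in $L^{2}(\Omega, d\nu)$. Substituting $\alpha=0$ into \eqref{LH2a} gives the pivot inequality
$$\int_{\Omega}|\nabla_{X} u|^{2}\,d\nu \;\geq\; \left(\frac{\beta-2}{2}\right)^{2}\int_{\Omega}\Gamma^{\frac{-2}{2-\beta}}|\nabla_{X}\Gamma^{\frac{1}{2-\beta}}|^{2}|u|^{2}\,d\nu \;+\; \frac{1}{2}\int_{\partial\Omega}\Gamma^{-1}|u|^{2}\langle\widetilde{\nabla}\Gamma,d\nu\rangle,$$
which is valid under exactly the hypotheses of the corollary since $({\rm A}_{y}^{+})$ ensures $\Gamma>0$ in $T_{y}\setminus\{y\}$.

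To obtain \eqref{UP1a} I would multiply this pivot inequality on both sides by the positive quantity $\int_{\Omega}\Gamma^{\frac{2}{2-\beta}}|\nabla_{X}\Gamma^{\frac{1}{2-\beta}}|^{2}|u|^{2}\,d\nu$. The boundary term then appears exactly as stated, and for the interior term I would estimate by Cauchy--Schwarz using the decomposition $|\nabla_{X}\Gamma^{\frac{1}{2-\beta}}|^{2}|u|^{2} = \bigl(\Gamma^{\frac{1}{2-\beta}}|\nabla_{X}\Gamma^{\frac{1}{2-\beta}}||u|\bigr)\bigl(\Gamma^{\frac{-1}{2-\beta}}|\nabla_{X}\Gamma^{\frac{1}{2-\beta}}||u|\bigr)$, which yields
$$\left(\int_{\Omega}|\nabla_{X}\Gamma^{\frac{1}{2-\beta}}|^{2}|u|^{2}\,d\nu\right)^{2}\leq \int_{\Omega}\Gamma^{\frac{2}{2-\beta}}|\nabla_{X}\Gamma^{\frac{1}{2-\beta}}|^{2}|u|^{2}\,d\nu\cdot\int_{\Omega}\Gamma^{\frac{-2}{2-\beta}}|\nabla_{X}\Gamma^{\frac{1}{2-\beta}}|^{2}|u|^{2}\,d\nu.$$
Rearranging and inserting this into the multiplied pivot gives \eqref{UP1a}.

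The proof of \eqref{UP2a} proceeds by the same mechanism but with a different multiplicative factor: I would multiply the pivot by $\int_{\Omega}\Gamma^{\frac{2}{2-\beta}}/|\nabla_{X}\Gamma^{\frac{1}{2-\beta}}|^{2}|u|^{2}\,d\nu$, and invoke Cauchy--Schwarz via the splitting $|u|^{2} = \bigl(\Gamma^{\frac{-1}{2-\beta}}|\nabla_{X}\Gamma^{\frac{1}{2-\beta}}||u|\bigr)\bigl(\Gamma^{\frac{1}{2-\beta}}|\nabla_{X}\Gamma^{\frac{1}{2-\beta}}|^{-1}|u|\bigr)$, so that
$$\left(\int_{\Omega}|u|^{2}\,d\nu\right)^{2}\leq \int_{\Omega}\Gamma^{\frac{-2}{2-\beta}}|\nabla_{X}\Gamma^{\frac{1}{2-\beta}}|^{2}|u|^{2}\,d\nu\cdot \int_{\Omega}\frac{\Gamma^{\frac{2}{2-\beta}}}{|\nabla_{X}\Gamma^{\frac{1}{2-\beta}}|^{2}}|u|^{2}\,d\nu,$$
from which \eqref{UP2a} drops out immediately upon substitution.

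There is no genuinely hard step here: the argument is purely algebraic once Theorem \ref{LHardy} is in hand. The only bookkeeping to be careful about is the matching of the exponents of $\Gamma$ so that the geometric mean of the two Cauchy--Schwarz factors collapses to precisely the weight $\Gamma^{\frac{-2}{2-\beta}}|\nabla_{X}\Gamma^{\frac{1}{2-\beta}}|^{2}$ appearing in the Hardy side; once this is checked, positivity of $\Gamma$ (from $({\rm A}_{y}^{+})$) ensures all weights are well defined and the boundary contributions transport through the multiplication unchanged.
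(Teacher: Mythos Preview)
Your proof is correct and follows essentially the same route as the paper: specialise the Hardy inequality \eqref{LH2a} to $\alpha=0$, multiply by the appropriate positive weight integral, and then apply Cauchy--Schwarz (which the paper phrases as H\"older) to collapse the product of the two weighted integrals to the square of the unweighted one. The paper is slightly less explicit than you are, merely writing ``the proof of \eqref{UP2a} is similar'' for the second inequality, whereas you spell out the relevant factorisation; otherwise the arguments coincide.
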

 As in \eqref{EQ:btn}, the last (boundary) terms in \eqref{UP1a} and \eqref{UP2a} can also be positive,
 thus providing generalisations but also refinements for uncertainty principles with respect to the boundary conditions.

\begin{proof}[Proof of Corollary \ref{Luncertaintya}]
Taking $\alpha=0$
in the inequality \eqref{LH2a} we get
$$ \int_{\Omega}\Gamma^{\frac{2}{2-\beta}}
|\nabla_{X}\Gamma^{\frac{1}{2-\beta}}|^{2}
|u|^{2}d\nu \int_{\Omega}|\nabla_{X}u|^{2}d\nu$$
$$ \geq\left(\frac{\beta-2}{2}\right)^{2}\int_{\Omega}\Gamma^{\frac{2}{2-\beta}}
|\nabla_{X}\Gamma^{\frac{1}{2-\beta}}|^{2}|u|^{2}d\nu\int_{\Omega}
 \frac{|\nabla_{X}\Gamma^{\frac{1}{2-\beta}}|^{2}}{\Gamma^{\frac{2}{2-\beta}}}
|u|^{2}\,d\nu$$$$+ \frac{1}{2}
\int_{\partial\Omega}\Gamma^{-1}|u|^{2}
\langle\widetilde{\nabla}\Gamma,
d\nu\rangle\int_{\Omega}\Gamma^{\frac{2}{2-\beta}}
|\nabla_{X}\Gamma^{\frac{1}{2-\beta}}|^{2} |u|^{2}d\nu$$
$$
\geq\left(\frac{\beta-2}{2}\right)^{2}\left(\int_{\Omega}
|\nabla_{X}\Gamma^{\frac{1}{2-\beta}}|^{2}
|u|^{2}d\nu\right)^{2}$$$$+ \frac{1}{2}
\int_{\partial\Omega}\Gamma^{-1}|u|^{2}
\langle\widetilde{\nabla}\Gamma,
d\nu\rangle\int_{\Omega}\Gamma^{\frac{2}{2-\beta}}
|\nabla_{X}\Gamma^{\frac{1}{2-\beta}}|^{2} |u|^{2}d\nu,
$$
where we have used the H\"older inequality in the last line.
This shows \eqref{UP1a}. The proof of \eqref{UP2a} is similar.
\end{proof}

As in the example \eqref{HRn}, in the Euclidean case $M=\mathbb R^{n}$ with $\beta=n\geq 3$, we have $\Gamma^{\frac{1}{2-\beta}}(x)=C|x|$ is the Euclidean distance,
so that $|\nabla \Gamma^{\frac{1}{2-\beta}}|=C$, and hence both
\eqref{UP1a} and \eqref{UP2a} reduce to the classical uncertainty principle
for $\Omega\subset\mathbb R^{n}$ if $u=0$ on $\partial\Omega$
(usually one takes $u\in C_{0}^{\infty}(\Omega)$):
\begin{equation*}\label{UPRna}
\int_{\Omega} |x|^{2} |u(x)|^{2}dx \int_{\Omega}|\nabla
u(x)|^{2}dx \geq\left(\frac{n-2}{2}\right)^{2}\left(\int_{\Omega}
 |u(x)|^{2} dx\right)^{2},\quad n\geq 3.
\end{equation*}
Similarly to the example of stratified Lie groups in \eqref{HRC}, e.g. now with boundary terms,
with $\beta=Q\geq 3$ the homogeneous dimension of the group, and
$\Gamma^{\frac{1}{2-\beta}}(x)=d(x)$ a quasi-distance on the group, for example
\eqref{UP1a} reduces to
\begin{multline*}
\int_{\Omega}d^{2}|\nabla_X d|^{2} |u|^{2}d\nu
\int_{\Omega}|\nabla_X u|^{2}d\nu
\geq\left(\frac{Q-2}{2}\right)^{2}\left(\int_{\Omega} |\nabla_X
d|^{2} |u|^{2} d\nu\right)^{2}\\+ \frac{1}{2}
\int_{\partial\Omega}d^{Q-2}|u|^{2}
\langle\widetilde{\nabla}d^{2-Q},
d\nu\rangle\int_{\Omega}d^{2}|\nabla_X d|^{2} |u|^{2}d\nu.
\end{multline*}
Again, if $u\in C_{0}^{\infty}(\Omega)$, the last (boundary) term disappears,
and one obtains the known uncertainty principle on the stratified Lie groups.

\begin{cor}[Uncertainly principle on $\Omega$ with double boundary terms]\label{Luncertainty}
Let $y\in M$ be such that {\rm (${\rm A}_{y}^{+}$)} holds with the fundamental solution $\Gamma=\Gamma_{y}$ in $T_{y}$.
Let $\Omega\subset T_{y},\,y\not\in\partial\Omega,$ be an admissible domain and let $u\in C^{1}(\Omega)\bigcap C(\overline{\Omega})$.
Then for $\beta>2$ we have
\begin{multline}\label{UP1}
\int_{\Omega}\Gamma^{\frac{2}{2-\beta}}|\nabla_{X}\Gamma^{\frac{1}{2-\beta}}|
^{2} |u|^{2}d\nu \int_{\Omega}|\nabla_{X}u|^{2}d\nu
\geq\left(\frac{\beta-2}{2}\right)^{2}\left(\int_{\Omega}
|\nabla_{X}\Gamma^{\frac{1}{2-\beta}}|^{2} |u|^{2}
d\nu\right)^{2}\\+ \frac{1}{4}\int_{\Omega}\frac{|\nabla_{X} \Gamma^{\frac{1}{2-\beta}}|^{2}}{\Gamma^{\frac{2}{2-\beta}}}
\,
\left({\ln}\frac{R}{\Gamma^{\frac{1}{2-\beta}}}\right)^{-2}|u|^{2}d\nu\int_{\Omega}\Gamma^{\frac{2}{2-\beta}}
|\nabla_{X}\Gamma^{\frac{1}{2-\beta}}|^{2} |u|^{2}d\nu
\\+\frac{1}{2(\beta-2)}\int_{\partial\Omega}
\Gamma^{-1}\left({\ln}\frac{R}{\Gamma^{\frac{1}{2-\beta}}}\right)^{-1}|u|^{2}
\langle\widetilde{\nabla}\Gamma,
d\nu\rangle\int_{\Omega}\Gamma^{\frac{2}{2-\beta}}
|\nabla_{X}\Gamma^{\frac{1}{2-\beta}}|^{2} |u|^{2}d\nu
\\+\frac{1}{2}
\int_{\partial\Omega}\Gamma^{-1}|u|^{2}
\langle\widetilde{\nabla}\Gamma,
d\nu\rangle\int_{\Omega}\Gamma^{\frac{2}{2-\beta}}
|\nabla_{X}\Gamma^{\frac{1}{2-\beta}}|^{2} |u|^{2}d\nu,
\end{multline}
and also
\begin{multline}\label{UP2}
\int_{\Omega}\frac{\Gamma^{\frac{2}{2-\beta}}}{|\nabla_{X}\Gamma^{\frac{1}{2-\beta}}|
^{2}}|u|^{2}d\nu\int_{\Omega}|\nabla_{X}u|^{2}d\nu
\geq\left(\frac{\beta-2}{2}\right)^{2}\left(\int_{\Omega}|u|^{2}d\nu\right)^{2}
\\+
\frac{1}{4}\int_{\Omega}\frac{|\nabla_{X} \Gamma^{\frac{1}{2-\beta}}|^{2}}{\Gamma^{\frac{2}{2-\beta}}}\,
\left({\ln}\frac{R}{\Gamma^{\frac{1}{2-\beta}}}\right)^{-2}|u|^{2}d\nu\int_{\Omega}\frac{\Gamma^{\frac{2}{2-\beta}}}
{|\nabla_{X}\Gamma^{\frac{1}{2-\beta}}|^{2}}|u|^{2}d\nu
\\+\frac{1}{2(\beta-2)}\int_{\partial\Omega}
\Gamma^{-1}\left({\ln}\frac{R}{\Gamma^{\frac{1}{2-\beta}}}\right)^{-1}|u|^{2}
\langle\widetilde{\nabla}\Gamma,
d\nu\rangle\int_{\Omega}\frac{\Gamma^{\frac{2}{2-\beta}}}
{|\nabla_{X}\Gamma^{\frac{1}{2-\beta}}|^{2}}|u|^{2}d\nu
\\+\frac{1}{2} \int_{\partial\Omega}\Gamma^{-1}|u|^{2}
\langle\widetilde{\nabla}\Gamma,
d\nu\rangle\,\int_{\Omega}\frac{\Gamma^{\frac{2}{2-\beta}}}
{|\nabla_{X}\Gamma^{\frac{1}{2-\beta}}|^{2}}|u|^{2}d\nu.
\end{multline}
\end{cor}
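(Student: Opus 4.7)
The plan is to mirror the proof of Corollary \ref{Luncertaintya}, replacing the basic Hardy inequality \eqref{LH2a} by the refined inequality \eqref{LH2}. Concretely, I would set $\alpha = 0$ in \eqref{LH2} (which is admissible since $\beta>2$ is exactly $0>2-\beta$), obtaining a lower bound for $\int_\Omega |\nabla_X u|^2 \, d\nu$ consisting of a main interior term, a logarithmic interior term, and two boundary terms.

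For \eqref{UP1}, I would then multiply both sides of this inequality by the positive quantity
\[
\int_\Omega \Gamma^{\frac{2}{2-\beta}} |\nabla_X \Gamma^{\frac{1}{2-\beta}}|^2 |u|^2 \, d\nu.
\]
Once one notices that $\Gamma^{\frac{-2}{2-\beta}} = 1/\Gamma^{\frac{2}{2-\beta}}$, the logarithmic interior term and the two boundary terms reproduce verbatim the last three summands on the right-hand side of \eqref{UP1}. For the leading interior term, the Cauchy--Schwarz inequality applied to the factorisation
\[
|\nabla_X \Gamma^{\frac{1}{2-\beta}}|^2 |u|^2 = \bigl( \Gamma^{\frac{-1}{2-\beta}} |\nabla_X \Gamma^{\frac{1}{2-\beta}}|\, |u|\bigr) \cdot \bigl( \Gamma^{\frac{1}{2-\beta}} |\nabla_X \Gamma^{\frac{1}{2-\beta}}|\, |u|\bigr)
\]
yields
\[
\Bigl(\int_\Omega |\nabla_X \Gamma^{\frac{1}{2-\beta}}|^2 |u|^2 \, d\nu\Bigr)^{2} \leq \int_\Omega \Gamma^{\frac{-2}{2-\beta}} |\nabla_X \Gamma^{\frac{1}{2-\beta}}|^2 |u|^2 \, d\nu \cdot \int_\Omega \Gamma^{\frac{2}{2-\beta}} |\nabla_X \Gamma^{\frac{1}{2-\beta}}|^2 |u|^2 \, d\nu,
\]
which produces the $\bigl(\frac{\beta-2}{2}\bigr)^2 \bigl(\int_\Omega |\nabla_X \Gamma^{\frac{1}{2-\beta}}|^2 |u|^2\, d\nu\bigr)^2$ contribution required on the right-hand side of \eqref{UP1}.

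For \eqref{UP2} the same strategy applies, but I would multiply \eqref{LH2} (with $\alpha=0$) instead by
\[
\int_\Omega \frac{\Gamma^{\frac{2}{2-\beta}}}{|\nabla_X \Gamma^{\frac{1}{2-\beta}}|^2} |u|^2 \, d\nu,
\]
and apply the Cauchy--Schwarz factorisation
\[
|u|^2 = \bigl( \Gamma^{\frac{-1}{2-\beta}} |\nabla_X \Gamma^{\frac{1}{2-\beta}}|\, |u|\bigr) \cdot \Bigl( \frac{\Gamma^{\frac{1}{2-\beta}}}{|\nabla_X \Gamma^{\frac{1}{2-\beta}}|}\, |u|\Bigr)
\]
to convert the leading interior term into $\bigl(\frac{\beta-2}{2}\bigr)^2 \bigl(\int_\Omega |u|^2 \, d\nu\bigr)^2$. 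The logarithmic interior term and the two boundary terms then carry over by direct multiplication to reproduce \eqref{UP2} exactly.

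No serious obstacle is anticipated; the argument is a weighted Cauchy--Schwarz manipulation on top of the already-proved refined Hardy inequality. The only points requiring care are that the assumption (${\rm A}_y^+$) guarantees $\Gamma>0$ on $T_y\setminus\{y\}$ so that all negative and fractional powers of $\Gamma$ are meaningful, that the hypothesis $y\notin\partial\Omega$ keeps the singularity of $\Gamma$ off the boundary integrals, and that the condition $R\geq e\,\sup_\Omega \Gamma^{\frac{1}{2-\beta}}$ forces $\ln(R/\Gamma^{\frac{1}{2-\beta}})\geq 1$, so the reciprocal logarithmic weights appearing in \eqref{UP1}--\eqref{UP2} are strictly positive and well-defined.
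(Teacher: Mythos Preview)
Your proposal is correct and follows essentially the same argument as the paper: set $\alpha=0$ in \eqref{LH2}, multiply through by the appropriate positive integral, and apply Cauchy--Schwarz (which the paper calls H\"older) to the leading term to obtain the squared form. The paper only writes out \eqref{UP1} explicitly and declares the proof of \eqref{UP2} similar, whereas you spell out the correct Cauchy--Schwarz factorisation for both cases.
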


\begin{proof}[Proof of Corollary \ref{Luncertainty}]
Taking $\alpha=0$
in the inequality \eqref{LH2} we get
$$ \int_{\Omega}\Gamma^{\frac{2}{2-\beta}}
|\nabla_{X}\Gamma^{\frac{1}{2-\beta}}|^{2}
|u|^{2}d\nu \int_{\Omega}|\nabla_{X}u|^{2}d\nu$$
$$ \geq\left(\frac{\beta-2}{2}\right)^{2}\int_{\Omega}\Gamma^{\frac{2}{2-\beta}}
|\nabla_{X}\Gamma^{\frac{1}{2-\beta}}|^{2}|u|^{2}d\nu\int_{\Omega}
\frac{|\nabla_{X}\Gamma^{\frac{1}{2-\beta}}|^{2}}
{\Gamma^{\frac{2}{2-\beta}}}|u|^{2}\,d\nu$$

$$+ \frac{1}{4}\int_{\Omega}\frac{|\nabla_{X} \Gamma^{\frac{1}{2-\beta}}|^{2}}{\Gamma^{\frac{2}{2-\beta}}}\,
\left({\ln}\frac{R}{\Gamma^{\frac{1}{2-\beta}}}\right)^{-2}|u|^{2}d\nu\int_{\Omega}\Gamma^{\frac{2}{2-\beta}}
|\nabla_{X}\Gamma^{\frac{1}{2-\beta}}|^{2} |u|^{2}d\nu
$$
$$+\frac{1}{2(\beta-2)}\int_{\partial\Omega}
\Gamma^{-1}\left({\ln}\frac{R}{\Gamma^{\frac{1}{2-\beta}}}\right)^{-1}|u|^{2}
\langle\widetilde{\nabla}\Gamma,
d\nu\rangle\int_{\Omega}\Gamma^{\frac{2}{2-\beta}}
|\nabla_{X}\Gamma^{\frac{1}{2-\beta}}|^{2} |u|^{2}d\nu
$$
$$+\frac{1}{2}
\int_{\partial\Omega}\Gamma^{-1}|u|^{2}
\langle\widetilde{\nabla}\Gamma,
d\nu\rangle\int_{\Omega}\Gamma^{\frac{2}{2-\beta}}
|\nabla_{X}\Gamma^{\frac{1}{2-\beta}}|^{2} |u|^{2}d\nu$$
$$
\geq\left(\frac{\beta-2}{2}\right)^{2}\left(\int_{\Omega}
|\nabla_{X}\Gamma^{\frac{1}{2-\beta}}|^{2}
|u|^{2}d\nu\right)^{2}$$$$+ \frac{1}{4}\int_{\Omega}\frac{|\nabla_{X} \Gamma^{\frac{1}{2-\beta}}|^{2}}{\Gamma^{\frac{2}{2-\beta}}}\,
\left({\ln}\frac{R}{\Gamma^{\frac{1}{2-\beta}}}\right)^{-2}|u|^{2}d\nu\int_{\Omega}\Gamma^{\frac{2}{2-\beta}}
|\nabla_{X}\Gamma^{\frac{1}{2-\beta}}|^{2} |u|^{2}d\nu
$$$$+\frac{1}{2(\beta-2)}\int_{\partial\Omega}
\Gamma^{-1}\left({\ln}\frac{R}{\Gamma^{\frac{1}{2-\beta}}}\right)^{-1}|u|^{2}
\langle\widetilde{\nabla}\Gamma,
d\nu\rangle\int_{\Omega}\Gamma^{\frac{2}{2-\beta}}
|\nabla_{X}\Gamma^{\frac{1}{2-\beta}}|^{2} |u|^{2}d\nu
$$$$+\frac{1}{2}
\int_{\partial\Omega}\Gamma^{-1}|u|^{2}
\langle\widetilde{\nabla}\Gamma,
d\nu\rangle\int_{\Omega}\Gamma^{\frac{2}{2-\beta}}
|\nabla_{X}\Gamma^{\frac{1}{2-\beta}}|^{2} |u|^{2}d\nu,
$$
where we have used the H\"older inequality.
This shows \eqref{UP1}. The proof of \eqref{UP2} is similar.
\end{proof}

In the Euclidean case $M=\mathbb R^{n}$ with $\beta=n\geq 3$, we have $\Gamma^{\frac{1}{2-\beta}}(x)=C|x|$ is the Euclidean distance,
so that $|\nabla \Gamma^{\frac{1}{2-\beta}}|=C$, and hence both
\eqref{UP1} and \eqref{UP2} reduce to the improved uncertainty principle
for $\Omega\subset\mathbb R^{n}$ if $u=0$ on $\partial\Omega$
(usually one takes $u\in C_{0}^{\infty}(\Omega)$):
\begin{multline*}
\int_{\Omega} |x|^{2} |u(x)|^{2}dx \int_{\Omega}|\nabla
u(x)|^{2}dx \geq\left(\frac{n-2}{2}\right)^{2}\left(\int_{\Omega}
 |u(x)|^{2} dx\right)^{2}\\
 +\frac{1}{4}\int_{\Omega}\frac{1}{|x|^{2}}\,\left({\ln}\frac{R}{|x|}\right)^{-2}|u(x)|^{2}d\nu\int_{\Omega}|x|^{2}
|u(x)|^{2}d\nu,\quad n\geq 3.
\end{multline*}
Similarly to the example of stratified Lie groups in \eqref{HRC}, e.g. now with boundary terms,
with $\beta=Q\geq 3$ the homogeneous dimension of the group $\mathbb{G}$, and
$\Gamma^{\frac{1}{2-\beta}}(x)=d(x)$ a quasi-distance on the group, for example
\eqref{UP1} reduces to
\begin{multline*}
\int_{\Omega}d^{2}|\nabla_X d|^{2} |u|^{2}d\nu
\int_{\Omega}|\nabla_X u|^{2}d\nu
\geq\left(\frac{Q-2}{2}\right)^{2}\left(\int_{\Omega} |\nabla_X
d|^{2} |u|^{2} d\nu\right)^{2}\\+ \frac{1}{4}\int_{\Omega}\frac{|\nabla_{X} d|^{2}}{d^{2}}
\,
\left({\ln}\frac{R}{d}\right)^{-2}|u|^{2}d\nu\int_{\Omega}d^{2}
|\nabla_{X}d|^{2} |u|^{2}d\nu
\\
+\frac{1}{2(Q-2)}\int_{\partial\Omega}
d^{Q-2}\left({\ln}\frac{R}{d}\right)^{-1}|u|^{2}
\langle\widetilde{\nabla}d^{2-Q},
d\nu\rangle\int_{\Omega}d^{2}
|\nabla_{X}d|^{2} |u|^{2}d\nu
\\
\\+\frac{1}{2}
\int_{\partial\Omega}d^{Q-2}|u|^{2}
\langle\widetilde{\nabla}d^{2-Q},
d\nu\rangle\int_{\Omega}d^{2}|\nabla_X d|^{2} |u|^{2}d\nu.
\end{multline*}
Again, if $u\in C_{0}^{\infty}(\mathbb{G})$, the last terms disappear,
and one obtains the improved uncertainty principle on stratified Lie groups.

\section{Local Rellich inequalities}
\label{SecR}

We now present local refined versions of Rellich
inequalities with additional boundary terms
on the right hand side.

\begin{thm}\label{LRellich} Let $y\in M$ be such that {\rm (${\rm A}_{y}^{+}$)} holds with the fundamental solution $\Gamma=\Gamma_{y}$ in $T_{y}$.
Let $\Omega\subset T_{y},\,y\not\in\partial\Omega,$ be a strongly admissible domain,
$\alpha\in \mathbb{R}$,\ $\beta>\alpha>4-\beta$, $\beta>2$ and $R\geq\, e\, {\rm sup}_{\Omega}\Gamma^{\frac{1}{2-\beta}}$. Let
$u\in C^{2}(\Omega)\bigcap C^{1}(\overline{\Omega})$.
 Then the following generalised local Rellich
inequalities are valid:
\begin{multline}\label{LR2a}
\qquad \qquad \int_{\Omega}\frac{\Gamma^{\frac{\alpha}{2-\beta}}}
{|\nabla_{X}\Gamma^{\frac{1}{2-\beta}}|^{2}}
|\mathcal{L}u|^{2}d\nu\geq \frac{(\beta+\alpha-4)^{2}(\beta-\alpha)^{2}}{16}\int_{\Omega}
\Gamma^{\frac{\alpha-4}{2-\beta}} |\nabla_{X} \Gamma^{\frac{1}{2-\beta}}|^{2}
|u|^{2}\,d\nu
\\+
\frac{(\beta+\alpha-4)^{2}(\beta-\alpha)}{4(\beta-2)}\int_{\partial\Omega}
\Gamma^{\frac{\alpha-2}{2-\beta}-1}|u|^{2}
\langle\widetilde{\nabla}\Gamma,
d\nu\rangle+\frac{(\beta+\alpha-4)(\beta-\alpha)}{4}\mathcal{C}(u),
\end{multline}
and its further refinement
\begin{multline}\label{LR2}
\qquad \qquad \int_{\Omega}\frac{\Gamma^{\frac{\alpha}{2-\beta}}}
{|\nabla_{X}\Gamma^{\frac{1}{2-\beta}}|^{2}}
|\mathcal{L}u|^{2}d\nu\geq \frac{(\beta+\alpha-4)^{2}(\beta-\alpha)^{2}}{16}\int_{\Omega}
\Gamma^{\frac{\alpha-4}{2-\beta}} |\nabla_{X} \Gamma^{\frac{1}{2-\beta}}|^{2}
|u|^{2}\,d\nu\\+\frac{(\beta+\alpha-4)(\beta-\alpha)}{8}\int_{\Omega}\Gamma^{\frac{\alpha-4}{2-\beta}}
|\nabla_{X} \Gamma^{\frac{1}{2-\beta}}|^{2}\,
\left({\ln}\frac{R}{\Gamma^{\frac{1}{2-\beta}}}\right)^{-2}|u|^{2}d\nu
\\+\frac{(\beta+\alpha-4)(\beta-\alpha)}{4(\beta-2)}\int_{\partial\Omega}
\Gamma^{\frac{\alpha-2}{2-\beta}-1}\left({\ln}\frac{R}{\Gamma^{\frac{1}{2-\beta}}}\right)^{-1}|u|^{2}
\langle\widetilde{\nabla}\Gamma,
d\nu\rangle
\\+
\frac{(\beta+\alpha-4)^{2}(\beta-\alpha)}{4(\beta-2)}\int_{\partial\Omega}
\Gamma^{\frac{\alpha-2}{2-\beta}-1}|u|^{2}
\langle\widetilde{\nabla}\Gamma,
d\nu\rangle+\frac{(\beta+\alpha-4)(\beta-\alpha)}{4}\mathcal{C}(u),
\end{multline}
where $\nabla_{X}=(X_{1},\ldots,X_{N})$ and
$$\mathcal{C}(u):=\frac{\alpha-2}{2-\beta}\int_{\partial\Omega}
u^{2}\Gamma^{\frac{\alpha-2}{2-\beta}-1}\langle\widetilde{\nabla}\Gamma,
d\nu\rangle-2\int_{\partial\Omega}
\Gamma^{\frac{\alpha-2}{2-\beta}}u\langle\widetilde{\nabla}u,
d\nu\rangle.$$
\end{thm}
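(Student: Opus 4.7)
The plan is to combine the Cauchy--Schwarz inequality with an integration-by-parts identity coming from the Green formulae of Proposition~\ref{greenformulae}, and then to close the estimate using the local Hardy inequality \eqref{LH2a} of Theorem~\ref{LHardy}. Throughout I write $d:=\Gamma^{1/(2-\beta)}$, so that, as in the Hardy proof (see \eqref{H2a}),
\begin{equation*}
\mathcal{L}(d^{s})=s(s+\beta-2)\,d^{s-2}|\nabla_{X}d|^{2}\quad\text{away from } y,
\end{equation*}
a consequence of $\mathcal{L}(d^{2-\beta})=\mathcal{L}\Gamma=-\delta_{y}$. The starting point is Cauchy--Schwarz,
\begin{equation*}
\left(\int_{\Omega} d^{\alpha-2}\,u\,\mathcal{L}u\,d\nu\right)^{\!2}
\leq
\left(\int_{\Omega}\frac{d^{\alpha}}{|\nabla_{X}d|^{2}}|\mathcal{L}u|^{2}d\nu\right)
\left(\int_{\Omega} d^{\alpha-4}|\nabla_{X}d|^{2}\,u^{2}\,d\nu\right),
\end{equation*}
so it suffices to establish a sharp lower bound for $-\int_{\Omega}d^{\alpha-2}u\,\mathcal{L}u\,d\nu$ in terms of $J:=\int_{\Omega}d^{\alpha-4}|\nabla_{X}d|^{2}u^{2}\,d\nu$ and the boundary corrections.

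For the identity I would apply Green's second formula \eqref{g2} to the pair $(u^{2},d^{\alpha-2})$; using $\mathcal{L}(u^{2})=2u\mathcal{L}u+2|\nabla_{X}u|^{2}$ together with the identity above for $\mathcal{L}(d^{\alpha-2})$ (the delta contribution at $y$ being killed after integration against $u^{2}$ by the assumption $\alpha>4-\beta$ and the condition $1/\Gamma(y)=0$ in $({\rm A}_{y}^{+})$, exactly as in \eqref{Lzero}), and rewriting the two resulting boundary integrals in the form that defines $\mathcal{C}(u)$, one obtains the exact equality
\begin{equation*}
\int_{\Omega}d^{\alpha-2}\,u\,\mathcal{L}u\,d\nu
=\frac{(\alpha-2)(\alpha+\beta-4)}{2}J-\int_{\Omega}d^{\alpha-2}|\nabla_{X}u|^{2}d\nu-\frac{1}{2}\mathcal{C}(u).
\end{equation*}
Next I feed the local Hardy inequality \eqref{LH2a}, with the parameter $\alpha$ replaced by $\alpha-2$ (whose hypothesis $\alpha-2>2-\beta$ is exactly $\alpha>4-\beta$), into the gradient term. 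After the algebraic simplification
\begin{equation*}
\Bigl(\tfrac{\beta+\alpha-4}{2}\Bigr)^{2}-\tfrac{(\alpha-2)(\alpha+\beta-4)}{2}=\tfrac{(\beta+\alpha-4)(\beta-\alpha)}{4},
\end{equation*}
I arrive at $-\int_{\Omega} d^{\alpha-2}u\,\mathcal{L}u\,d\nu\geq CJ+B$, with $C=\tfrac{(\beta+\alpha-4)(\beta-\alpha)}{4}>0$ and $B$ the precise linear combination of $\mathcal{C}(u)$ and the boundary integral $\int_{\partial\Omega}\Gamma^{\frac{\alpha-2}{2-\beta}-1}u^{2}\langle\widetilde{\nabla}\Gamma,d\nu\rangle$ occurring in \eqref{LR2a}. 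Squaring, invoking $(CJ+B)^{2}\geq C^{2}J^{2}+2CBJ$, inserting into the Cauchy--Schwarz bound and dividing by $J$ then yields \eqref{LR2a} with all constants matching.

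The refinement \eqref{LR2} follows along the same lines, the only change being that in the last step I substitute the sharper Hardy inequality \eqref{LH2} (again with parameter $\alpha-2$) in place of \eqref{LH2a}; this augments $B$ by an additional interior logarithmic integral and by an additional boundary integral involving $(\ln R/\Gamma^{1/(2-\beta)})^{-1}$, and these extra terms propagate through the $2CB$ step to produce exactly the two additional summands appearing in \eqref{LR2}. I expect the main obstacles to be bookkeeping rather than conceptual: matching the boundary output of Green's second formula with the specific combination defining $\mathcal{C}(u)$, carefully verifying that the singular contribution of $\mathcal{L}(d^{\alpha-2})$ at $y$ vanishes against $u^{2}$ under the hypothesis $\alpha>4-\beta$, and addressing the sign condition needed in the squaring step $-I\geq CJ+B\Rightarrow I^{2}\geq(CJ+B)^{2}$, which is benign in the regime where the inequality represents a genuine refinement of the classical Rellich bound.
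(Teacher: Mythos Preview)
Your proposal is correct and matches the paper's argument step for step: the same Green-formula identity (the paper's \eqref{R2a}), the same application of the Hardy inequality \eqref{LH2a} with parameter shifted to $\alpha-2$ to bound the gradient term (yielding \eqref{R3a}), and Cauchy--Schwarz to finish. The only variation is in the closing step: where you square the lower bound $-I\geq CJ+B$ and divide by $J$, the paper instead pairs \eqref{R3a} with the Young-inequality upper bound $-I\leq \epsilon J+\tfrac{1}{4\epsilon}L$ (their \eqref{R4a}) and optimises over $\epsilon=C/2$, reaching the identical conclusion $L\geq C^{2}J+2CB$ without needing any sign hypothesis on $CJ+B$---so the obstacle you flag simply does not arise in their formulation.
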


\begin{proof}[Proof of Theorem \ref{LRellich}]
Proof of \eqref{LR2a}.  A direct calculation shows that
\begin{multline*}
\mathcal{L}\Gamma^{\frac{\alpha-2}{2-\beta}}=\sum_{k=1}^{N}X_{k}^{2}\Gamma^{\frac{\alpha-2}{2-\beta}}
=(\alpha-2)\sum_{k=1}^{N}X_{k}\left(\Gamma^{\frac{\alpha-3}{2-\beta}}X_{k}\Gamma^{\frac{1}{2-\beta}}\right)
\\=(\alpha-2)(\alpha-3)\Gamma^{\frac{\alpha-4}{2-\beta}}
\sum_{k=1}^{N}\left|
X_{k}\Gamma^{\frac{1}{2-\beta}}\right|^{2}
+(\alpha-2)\Gamma^{\frac{\alpha-3}{2-\beta}}
\sum_{k=1}^{N}
X_{k}\left(X_{k}\Gamma^{\frac{1}{2-\beta}}\right)
\\=(\alpha-2)(\alpha-3)\Gamma^{\frac{\alpha-4}{2-\beta}}
\sum_{k=1}^{N}\left|
X_{k}\Gamma^{\frac{1}{2-\beta}}\right|^{2}
+\frac{\alpha-2}{2-\beta}\Gamma^{\frac{\alpha-3}{2-\beta}}
\sum_{k=1}^{N}
X_{k}\left(\Gamma^{\frac{\beta-1}{2-\beta}}X_{k}\Gamma\right)
\\=(\alpha-2)(\alpha-3)\Gamma^{\frac{\alpha-4}{2-\beta}}
\sum_{k=1}^{N}\left|
X_{k}\Gamma^{\frac{1}{2-\beta}}\right|^{2}
+\frac{(\alpha-2)(\beta-1)}{2-\beta}\Gamma^{\frac{\alpha-3}{2-\beta}}
\Gamma^{-1}\sum_{k=1}^{N}
(X_{k}\Gamma^{\frac{1}{2-\beta}})(X_{k}\Gamma)
\\+\frac{\alpha-2}{2-\beta}
\Gamma^{\frac{\beta+\alpha-4}{2-\beta}}\mathcal{L}\Gamma=
(\alpha-2)(\alpha-3)\Gamma^{\frac{\alpha-4}{2-\beta}}
\sum_{k=1}^{N}\left|
X_{k}\Gamma^{\frac{1}{2-\beta}}\right|^{2}
\\+(\alpha-2)(\beta-1)\Gamma^{\frac{\alpha-4}{2-\beta}}
\sum_{k=1}^{N}
(X_{k}\Gamma^{\frac{1}{2-\beta}})(X_{k}\Gamma^{\frac{1}{2-\beta}})
+\frac{\alpha-2}{2-\beta}
\Gamma^{\frac{\beta+\alpha-4}{2-\beta}}\mathcal{L}\Gamma
\\=(\beta+\alpha-4)(\alpha-2)\Gamma^{\frac{\alpha-4}{2-\beta}}
|\nabla_{X}\Gamma^{\frac{1}{2-\beta}}|^{2}+
\frac{\alpha-2}{2-\beta}\Gamma^{\frac{\beta+\alpha-4}{2-\beta}}\mathcal{L}\Gamma,
\end{multline*}
that is,
\begin{equation}\label{mainequility}
\mathcal{L}\Gamma^{\frac{\alpha-2}{2-\beta}}=(\beta+\alpha-4)(\alpha-2)\Gamma^{\frac{\alpha-4}{2-\beta}}
|\nabla_{X}\Gamma^{\frac{1}{2-\beta}}|^{2}+
\frac{\alpha-2}{2-\beta}\Gamma^{\frac{\beta+\alpha-4}{2-\beta}}\mathcal{L}\Gamma.
\end{equation}

As before we can assume that $u$ is real-valued. Multiplying both sides of \eqref{mainequility} by $u^{2}$ and integrating over $\Omega$, since $u$ is the fundamental solution of $\mathcal{L}$ and $\beta+\alpha-4>0$, we obtain
\begin{equation}\label{RF1}
\int_{\Omega}u^{2}\mathcal{L}\Gamma^{\frac{\alpha-2}{2-\beta}}\,d\nu=
(\beta+\alpha-4)(\alpha-2)\int_{\Omega}\Gamma^{\frac{\alpha-4}{2-\beta}}
|\nabla_{X}\Gamma^{\frac{1}{2-\beta}}|^{2}u^{2}\,d\nu.
\end{equation}
On the other hand, by using the Green's second formula \eqref{g2}, we have
\begin{multline}\label{RF2}
\int_{\Omega}u^{2}\mathcal{L}\Gamma^{\frac{\alpha-2}{2-\beta}}\,d\nu=
\int_{\Omega}\Gamma^{\frac{\alpha-2}{2-\beta}}\mathcal{L}u^{2}\,d\nu
+\int_{\partial\Omega}
u^{2}\langle\widetilde{\nabla}\Gamma^{\frac{\alpha-2}{2-\beta}},
d\nu\rangle-\int_{\partial\Omega}
\Gamma^{\frac{\alpha-2}{2-\beta}}\langle\widetilde{\nabla}u^{2},
d\nu\rangle\\
=
\int_{\Omega}\Gamma^{\frac{\alpha-2}{2-\beta}}(2u\mathcal{L}u+2|\nabla_{X} u|^{2})\,d\nu
+\mathcal{C}(u),
\end{multline}
where
$$\mathcal{C}(u):=\frac{\alpha-2}{2-\beta}\int_{\partial\Omega}
u^{2}\Gamma^{\frac{\alpha-2}{2-\beta}-1}\langle\widetilde{\nabla}\Gamma,
d\nu\rangle-\int_{\partial\Omega}
2\Gamma^{\frac{\alpha-2}{2-\beta}}u\langle\widetilde{\nabla}u,
d\nu\rangle.$$
Combining \eqref{RF1} and \eqref{RF2} we obtain
\begin{multline}\label{R2a}
-2\int_{\Omega}\Gamma^{\frac{\alpha-2}{2-\beta}}u \mathcal{L}ud\nu+(\beta+\alpha-4)(\alpha-2)          \int_{\Omega}\Gamma^{\frac{\alpha-4}{2-\beta}}|\nabla_{X} \Gamma^{\frac{1}{2-\beta}}|^{2}\,u^{2}d\nu\\=2          \int_{\Omega}\Gamma^{\frac{\alpha-2}{2-\beta}}|\nabla_{X} u|^{2}d\nu+\mathcal{C}(u).
\end{multline}
By using \eqref{LH2a} we obtain
\begin{multline}
-2\int_{\Omega}\Gamma^{\frac{\alpha-2}{2-\beta}}u \mathcal{L}ud\nu+(\beta+\alpha-4)(\alpha-2)          \int_{\Omega}\Gamma^{\frac{\alpha-4}{2-\beta}}|\nabla_{X} \Gamma^{\frac{1}{2-\beta}}|^{2}\,|u|^{2}d\nu
\\\geq
2\left(\frac{\beta+\alpha-4}{2}\right)^{2}\int_{\Omega}
\Gamma^{\frac{\alpha-4}{2-\beta}} |\nabla_{X} \Gamma^{\frac{1}{2-\beta}}|^{2}
|u|^{2}\,d\nu\\+
\frac{\beta+\alpha-4}{\beta-2}\int_{\partial\Omega}
\Gamma^{\frac{\alpha-2}{2-\beta}-1}|u|^{2}
\langle\widetilde{\nabla}\Gamma,
d\nu\rangle+\mathcal{C}(u).
\end{multline}
It follows that
\begin{multline}\label{R3a}
-\int_{\Omega}\Gamma^{\frac{\alpha-2}{2-\beta}}u \mathcal{L}ud\nu\geq
\left(\frac{\beta+\alpha-4}{2}\right)\left(\frac{\beta-\alpha}{2}\right)\int_{\Omega}
\Gamma^{\frac{\alpha-4}{2-\beta}} |\nabla_{X} \Gamma^{\frac{1}{2-\beta}}|^{2}
|u|^{2}\,d\nu\\+
\frac{\beta+\alpha-4}{2(\beta-2)}\int_{\partial\Omega}
\Gamma^{\frac{\alpha-2}{2-\beta}-1}|u|^{2}
\langle\widetilde{\nabla}\Gamma,
d\nu\rangle+\frac{1}{2}\mathcal{C}(u).
\end{multline}
On the other hand, for any $\epsilon>0$ H\"older's and Young's inequalities give
\begin{multline}\label{R4a} -\int_{\Omega}\Gamma^{\frac{\alpha-2}{2-\beta}}u \mathcal{L}ud\nu\leq
\left(\int_{\Omega}\Gamma^{\frac{\alpha-4}{2-\beta}}
|\nabla_{X}\Gamma^{\frac{1}{2-\beta}}|^{2}|u|^{2}d\nu\right)^{\frac{1}{2}}
\left(\int_{\Omega}\frac{\Gamma^{\frac{\alpha}{2-\beta}}}{|\nabla_{X}\Gamma^{\frac{1}{2-\beta}}|^{2}}
|\mathcal{L}u|^{2}d\nu\right)^{\frac{1}{2}}\\
\leq
\epsilon
\int_{\Omega}\Gamma^{\frac{\alpha-4}{2-\beta}}
|\nabla_{X}\Gamma^{\frac{1}{2-\beta}}|^{2}|u|^{2}d\nu+
\frac{1}{4\epsilon}\int_{\Omega}\frac{\Gamma^{\frac{\alpha}{2-\beta}}}
{|\nabla_{X}\Gamma^{\frac{1}{2-\beta}}|^{2}}
|\mathcal{L}u|^{2}d\nu.
\end{multline}
Inequalities \eqref{R4a} and \eqref{R3a} imply that
\begin{multline*}
\int_{\Omega}\frac{\Gamma^{\frac{\alpha}{2-\beta}}}
{|\nabla_{X}\Gamma^{\frac{1}{2-\beta}}|^{2}}
|\mathcal{L}u|^{2}d\nu\geq
\left(-4\epsilon^{2}+(\beta+\alpha-4)(\beta-\alpha)\epsilon\right)\int_{\Omega}
\Gamma^{\frac{\alpha-4}{2-\beta}} |\nabla_{X} \Gamma^{\frac{1}{2-\beta}}|^{2}
|u|^{2}\,d\nu\\+
\frac{2(\beta+\alpha-4)\epsilon}{\beta-2}\int_{\partial\Omega}
\Gamma^{\frac{\alpha-2}{2-\beta}-1}|u|^{2}
\langle\widetilde{\nabla}\Gamma,
d\nu\rangle+2\epsilon\mathcal{C}(u).
\end{multline*}
Taking
$\epsilon=\frac{(\beta+\alpha-4)(\beta-\alpha)}{8},$ we obtain
\begin{multline*}
\int_{\Omega}\frac{\Gamma^{\frac{\alpha}{2-\beta}}}
{|\nabla_{X}\Gamma^{\frac{1}{2-\beta}}|^{2}}
|\mathcal{L}u|^{2}d\nu\geq \frac{(\beta+\alpha-4)^{2}(\beta-\alpha)^{2}}{16}\int_{\Omega}
\Gamma^{\frac{\alpha-4}{2-\beta}} |\nabla_{X} \Gamma^{\frac{1}{2-\beta}}|^{2}
|u|^{2}\,d\nu
\\+
\frac{(\beta+\alpha-4)^{2}(\beta-\alpha)}{4(\beta-2)}\int_{\partial\Omega}
\Gamma^{\frac{\alpha-2}{2-\beta}-1}|u|^{2}
\langle\widetilde{\nabla}\Gamma,
d\nu\rangle+\frac{(\beta+\alpha-4)(\beta-\alpha)}{4}\mathcal{C}(u).
\end{multline*}

Proof of \eqref{LR2}.
From \eqref{R2a}, by using \eqref{LH2}, we obtain
\begin{multline}\label{R2}
-2\int_{\Omega}\Gamma^{\frac{\alpha-2}{2-\beta}}u \mathcal{L}ud\nu+(\beta+\alpha-4)(\alpha-2)          \int_{\Omega}\Gamma^{\frac{\alpha-4}{2-\beta}}|\nabla_{X} \Gamma^{\frac{1}{2-\beta}}|^{2}\,|u|^{2}d\nu
\\\geq
2\left(\frac{\beta+\alpha-4}{2}\right)^{2}\int_{\Omega}
\Gamma^{\frac{\alpha-4}{2-\beta}} |\nabla_{X} \Gamma^{\frac{1}{2-\beta}}|^{2}
|u|^{2}\,d\nu\\+\frac{1}{2}\int_{\Omega}\Gamma^{\frac{\alpha-4}{2-\beta}}
|\nabla_{X} \Gamma^{\frac{1}{2-\beta}}|^{2}\,
\left({\ln}\frac{R}{\Gamma^{\frac{1}{2-\beta}}}\right)^{-2}|u|^{2}d\nu+\frac{1}{(\beta-2)}\int_{\partial\Omega}
\Gamma^{\frac{\alpha-2}{2-\beta}-1}\left({\ln}\frac{R}{\Gamma^{\frac{1}{2-\beta}}}\right)^{-1}|u|^{2}
\langle\widetilde{\nabla}\Gamma,
d\nu\rangle\\+
\frac{\beta+\alpha-4}{\beta-2}\int_{\partial\Omega}
\Gamma^{\frac{\alpha-2}{2-\beta}-1}|u|^{2}
\langle\widetilde{\nabla}\Gamma,
d\nu\rangle+\mathcal{C}(u).
\end{multline}
It follows that
\begin{multline}\label{R3}
-\int_{\Omega}\Gamma^{\frac{\alpha-2}{2-\beta}}u \mathcal{L}ud\nu\geq
\left(\frac{\beta+\alpha-4}{2}\right)\left(\frac{\beta-\alpha}{2}\right)\int_{\Omega}
\Gamma^{\frac{\alpha-4}{2-\beta}} |\nabla_{X} \Gamma^{\frac{1}{2-\beta}}|^{2}
|u|^{2}\,d\nu\\+\frac{1}{4}\int_{\Omega}\Gamma^{\frac{\alpha-4}{2-\beta}}
|\nabla_{X} \Gamma^{\frac{1}{2-\beta}}|^{2}\,
\left({\ln}\frac{R}{\Gamma^{\frac{1}{2-\beta}}}\right)^{-2}|u|^{2}d\nu
+\frac{1}{2(\beta-2)}\int_{\partial\Omega}
\Gamma^{\frac{\alpha-2}{2-\beta}-1}\left({\ln}\frac{R}{\Gamma^{\frac{1}{2-\beta}}}\right)^{-1}|u|^{2}
\langle\widetilde{\nabla}\Gamma,
d\nu\rangle
\\+
\frac{\beta+\alpha-4}{2(\beta-2)}\int_{\partial\Omega}
\Gamma^{\frac{\alpha-2}{2-\beta}-1}|u|^{2}
\langle\widetilde{\nabla}\Gamma,
d\nu\rangle+\frac{1}{2}\mathcal{C}(u).
\end{multline}

Inequalities \eqref{R4a} and \eqref{R3} imply that
\begin{multline*}
\int_{\Omega}\frac{\Gamma^{\frac{\alpha}{2-\beta}}}
{|\nabla_{X}\Gamma^{\frac{1}{2-\beta}}|^{2}}
|\mathcal{L}u|^{2}d\nu\geq
\left(-4\epsilon^{2}+(\beta+\alpha-4)(\beta-\alpha)\epsilon\right)\int_{\Omega}
\Gamma^{\frac{\alpha-4}{2-\beta}} |\nabla_{X} \Gamma^{\frac{1}{2-\beta}}|^{2}
|u|^{2}\,d\nu\\+\epsilon\int_{\Omega}\Gamma^{\frac{\alpha-4}{2-\beta}}
|\nabla_{X}\Gamma^{\frac{1}{2-\beta}}|^{2}\,
\left({\ln}\frac{R}{\Gamma^{\frac{1}{2-\beta}}}\right)^{-2}|u|^{2}d\nu
+\frac{2\epsilon}{(\beta-2)}\int_{\partial\Omega}
\Gamma^{\frac{\alpha-2}{2-\beta}-1}\left({\ln}\frac{R}{\Gamma^{\frac{1}{2-\beta}}}\right)^{-1}|u|^{2}
\langle\widetilde{\nabla}\Gamma,
d\nu\rangle\\+
\frac{2(\beta+\alpha-4)\epsilon}{\beta-2}\int_{\partial\Omega}
\Gamma^{\frac{\alpha-2}{2-\beta}-1}|u|^{2}
\langle\widetilde{\nabla}\Gamma,
d\nu\rangle+2\epsilon\mathcal{C}(u).
\end{multline*}
Taking
$\epsilon=\frac{(\beta+\alpha-4)(\beta-\alpha)}{8},$ we obtain
\begin{multline*}
\int_{\Omega}\frac{\Gamma^{\frac{\alpha}{2-\beta}}}
{|\nabla_{X}\Gamma^{\frac{1}{2-\beta}}|^{2}}
|\mathcal{L}u|^{2}d\nu\geq \frac{(\beta+\alpha-4)^{2}(\beta-\alpha)^{2}}{16}\int_{\Omega}
\Gamma^{\frac{\alpha-4}{2-\beta}} |\nabla_{X} \Gamma^{\frac{1}{2-\beta}}|^{2}
|u|^{2}\,d\nu\\+\frac{(\beta+\alpha-4)(\beta-\alpha)}{8}\int_{\Omega}\Gamma^{\frac{\alpha-4}{2-\beta}}
|\nabla_{X} \Gamma^{\frac{1}{2-\beta}}|^{2}\,
\left({\ln}\frac{R}{\Gamma^{\frac{1}{2-\beta}}}\right)^{-2}|u|^{2}d\nu
\\+\frac{(\beta+\alpha-4)(\beta-\alpha)}{4(\beta-2)}\int_{\partial\Omega}
\Gamma^{\frac{\alpha-2}{2-\beta}-1}\left({\ln}\frac{R}{\Gamma^{\frac{1}{2-\beta}}}\right)^{-1}|u|^{2}
\langle\widetilde{\nabla}\Gamma,
d\nu\rangle
\\+
\frac{(\beta+\alpha-4)^{2}(\beta-\alpha)}{4(\beta-2)}\int_{\partial\Omega}
\Gamma^{\frac{\alpha-2}{2-\beta}-1}|u|^{2}
\langle\widetilde{\nabla}\Gamma,
d\nu\rangle+\frac{(\beta+\alpha-4)(\beta-\alpha)}{4}\mathcal{C}(u).
\end{multline*}
This completes the proof.
\end{proof}

A modification and refinement of the proof yields another variant of an improved
Rellich inequality with boundary terms. In Remark \ref{strLiegroups} we will give simplified versions of
all the estimates in the setting of stratified Lie groups to clarify the differences in appearing weights and constants.

\begin{thm}\label{2LRellich} Let $y\in M$ be such that {\rm (${\rm A}_{y}^{+}$)} holds with the fundamental solution $\Gamma=\Gamma_{y}$ in $T_{y}$.
Let $\Omega\subset T_{y},\,y\not\in\partial\Omega,$ be a strongly admissible domain,
$\alpha\in \mathbb{R}$,\ $\beta>\alpha>\frac{8-\beta}{3}$, $\beta>2$ and $R\geq\, e\, {\rm sup}_{\Omega}\Gamma^{\frac{1}{2-\beta}}$. Let
$u\in C^{2}(\Omega)\bigcap C^{1}(\overline{\Omega})$.
 Then the following generalised local Rellich
inequalities are valid:
\begin{multline}\label{2LR2a}
\qquad \qquad \int_{\Omega}\frac{\Gamma^{\frac{\alpha}{2-\beta}}}
{|\nabla_{X}\Gamma^{\frac{1}{2-\beta}}|^{2}}|\mathcal{L}u|^{2}d\nu\geq
\frac{(\beta-\alpha)^{2}}{4}\int_{\Omega}\Gamma^{\frac{\alpha-2}{2-\beta}}|\nabla_{X} u|^{2}d\nu
\\+\frac{(\beta+3\alpha-8)(\beta+\alpha-4)(\beta-\alpha)}{8(\beta-2)}\int_{\partial\Omega}
\Gamma^{\frac{\alpha-2}{2-\beta}-1}|u|^{2}
\langle\widetilde{\nabla}\Gamma,
d\nu\rangle\\+
\frac{(\beta+\alpha-4)(\beta-\alpha)}{4}\mathcal{C}(u),
\end{multline}
and its further refinement
\begin{multline}\label{2LR2}
\qquad \qquad \int_{\Omega}\frac{\Gamma^{\frac{\alpha}{2-\beta}}}
{|\nabla_{X}\Gamma^{\frac{1}{2-\beta}}|^{2}}
|\mathcal{L}u|^{2}d\nu\geq \frac{(\beta-\alpha)^{2}}{4}\int_{\Omega}\Gamma^{\frac{\alpha-2}{2-\beta}}|\nabla_{X} u|^{2}d\nu\\+\frac{(\beta+3\alpha-8)(\beta-\alpha)}{16}\int_{\Omega}\Gamma^{\frac{\alpha-4}{2-\beta}}
|\nabla_{X} \Gamma^{\frac{1}{2-\beta}}|^{2}\,
\left({\ln}\frac{R}{\Gamma^{\frac{1}{2-\beta}}}\right)^{-2}|u|^{2}d\nu
\\
+\frac{(\beta+3\alpha-8)(\beta-\alpha)}
{8(\beta-2)}\int_{\partial\Omega}
\Gamma^{\frac{\alpha-2}{2-\beta}-1}\left({\ln}\frac{R}{\Gamma^{\frac{1}{2-\beta}}}\right)^{-1}|u|^{2}
\langle\widetilde{\nabla}\Gamma,
d\nu\rangle
\\+
\frac{(\beta+3\alpha-8)(\beta+\alpha-4)(\beta-\alpha)}{8(\beta-2)}\int_{\partial\Omega}
\Gamma^{\frac{\alpha-2}{2-\beta}-1}|u|^{2}
\langle\widetilde{\nabla}\Gamma,
d\nu\rangle \\
+\frac{(\beta+\alpha-4)(\beta-\alpha)}{4}\mathcal{C}(u),
\end{multline}
where $\nabla_{X}=(X_{1},\ldots,X_{N})$ and
$$\mathcal{C}(u):=\frac{\alpha-2}{2-\beta}\int_{\partial\Omega}
u^{2}\Gamma^{\frac{\alpha-2}{2-\beta}-1}\langle\widetilde{\nabla}\Gamma,
d\nu\rangle-2\int_{\partial\Omega}
\Gamma^{\frac{\alpha-2}{2-\beta}}u\langle\widetilde{\nabla}u,
d\nu\rangle.$$
\end{thm}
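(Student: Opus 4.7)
The plan is to follow the same strategy used to establish Theorem \ref{LRellich}, but to apply the Hardy inequality in the opposite direction so that the kinetic term $\int_\Omega \Gamma^{\frac{\alpha-2}{2-\beta}}|\nabla_X u|^2\,d\nu$ is retained on the right-hand side rather than being converted entirely into the Hardy weight $I_1:=\int_\Omega \Gamma^{\frac{\alpha-4}{2-\beta}}|\nabla_X\Gamma^{\frac{1}{2-\beta}}|^2 u^2\,d\nu$.

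First, I would recall from the proof of Theorem \ref{LRellich} the identity \eqref{R2a}, which was obtained by multiplying the pointwise identity \eqref{mainequility} for $\mathcal{L}\Gamma^{\frac{\alpha-2}{2-\beta}}$ by $u^2$, integrating, and then rewriting the result via Green's second formula \eqref{g2} together with the cancellation $\int_\Omega \mathcal{L}\Gamma\cdot \Gamma^{\frac{\alpha+\beta-4}{2-\beta}}u^2\,d\nu=0$ from \eqref{Lzero} granted by (${\rm A}_y^+$). Writing $J := \int_\Omega \Gamma^{\frac{\alpha-2}{2-\beta}}|\nabla_X u|^2\,d\nu$, this identity reads
\begin{equation*}
-2\int_\Omega \Gamma^{\frac{\alpha-2}{2-\beta}}u\mathcal{L}u\,d\nu \;=\; 2J - (\beta+\alpha-4)(\alpha-2)\, I_1 + \mathcal{C}(u).
\end{equation*}
Importantly, this identity is a pure algebraic consequence of Green's formula and makes no use of any Hardy-type inequality.

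Second, I apply the Cauchy--Schwarz and Young inequalities exactly as in \eqref{R4a}: for every $\epsilon>0$,
\begin{equation*}
-\int_\Omega \Gamma^{\frac{\alpha-2}{2-\beta}}u\mathcal{L}u\,d\nu \;\le\; \epsilon\, I_1 + \frac{1}{4\epsilon}\, K,\qquad K:=\int_\Omega \frac{\Gamma^{\frac{\alpha}{2-\beta}}}{|\nabla_X\Gamma^{\frac{1}{2-\beta}}|^2}|\mathcal{L}u|^2\,d\nu.
\end{equation*}
Combining this with the identity above and solving for $K$ yields
\begin{equation*}
K \;\ge\; 4\epsilon\, J \;-\; 2\epsilon\bigl[(\beta+\alpha-4)(\alpha-2)+2\epsilon\bigr]\,I_1 \;+\; 2\epsilon\,\mathcal{C}(u).
\end{equation*}

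Third, I invoke the Hardy inequality \eqref{LH2a} with $\alpha$ replaced by $\alpha-2$; this is permitted because the hypothesis $\alpha > \frac{8-\beta}{3}$ is strictly stronger than $\alpha>4-\beta$ whenever $\beta>2$. This yields an upper bound of $I_1$ in terms of $J$ and the boundary integral $I_2 := \int_{\partial\Omega}\Gamma^{\frac{\alpha-2}{2-\beta}-1}|u|^2 \langle\widetilde{\nabla}\Gamma,d\nu\rangle$. Substituting this upper bound back preserves a lower bound on $K$ only if the bracketed coefficient multiplying $I_1$ is non-negative. The choice $\epsilon=\frac{(\beta-\alpha)(\beta+\alpha-4)}{8}$ reduces that bracket to $\frac{(\beta+\alpha-4)(\beta+3\alpha-8)}{4}$, whose positivity is equivalent to $\alpha>\frac{8-\beta}{3}$; so the hypothesis is used precisely here. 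Using the elementary algebraic identity $2(\beta+\alpha-4)-(\beta+3\alpha-8)=\beta-\alpha$ in combining the two contributions to the coefficient of $J$ produces exactly the coefficient $\frac{(\beta-\alpha)^2}{4}$ claimed in \eqref{2LR2a}, and the announced boundary coefficient $\frac{(\beta+3\alpha-8)(\beta+\alpha-4)(\beta-\alpha)}{8(\beta-2)}$ drops out automatically.

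Finally, the refined inequality \eqref{2LR2} follows by running the very same argument but invoking the refined Hardy inequality \eqref{LH2} instead of \eqref{LH2a} in the third step; the two extra non-negative terms (the interior and boundary $\log$-weighted integrals) appearing in \eqref{LH2} survive the Hardy substitution, each multiplied by the factor $\frac{(\beta-\alpha)(\beta+3\alpha-8)}{4}$, and thereby yield the additional contributions in \eqref{2LR2}. The principal obstacle throughout is purely bookkeeping: keeping track of the signs of the various coefficients so that the Hardy step produces a genuine lower bound on $K$ rather than an upper bound; it is precisely the hypothesis $\alpha>\frac{8-\beta}{3}$ that makes the signs work out, and the clean form of the final constants results from the specific algebraic identity for $2(\beta+\alpha-4)-(\beta+3\alpha-8)$.
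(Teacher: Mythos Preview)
Your argument is correct, and it reaches the stated inequalities with the advertised constants; the algebraic identity $2(\beta+\alpha-4)-(\beta+3\alpha-8)=\beta-\alpha$ is indeed the mechanism that produces the leading coefficient $\frac{(\beta-\alpha)^2}{4}$, and the sign condition $\alpha>\frac{8-\beta}{3}$ is used exactly where you say.

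However, your route differs from the paper's in one substantive respect. At the third step you bound the Hardy weight $I_1=\int_\Omega \Gamma^{\frac{\alpha-4}{2-\beta}}|\nabla_X\Gamma^{\frac{1}{2-\beta}}|^2\,u^2\,d\nu$ from above by invoking the \emph{Hardy} inequality \eqref{LH2a} (respectively \eqref{LH2}) with shifted exponent. The paper instead bounds $I_1$ from above using the already established \emph{Rellich} inequality \eqref{LR2a} (respectively \eqref{LR2}) from Theorem~\ref{LRellich}, which expresses $I_1$ in terms of $K$, $\mathcal{C}(u)$ and the boundary integral; it then combines this with the same inequality \eqref{2R5a} and the same choice $\epsilon=\frac{(\beta+\alpha-4)(\beta-\alpha)}{8}$. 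Your approach is more self-contained, since it does not require Theorem~\ref{LRellich} as an intermediate result and appeals only to Theorem~\ref{LHardy}; the paper's approach, on the other hand, makes the logical dependence between the two Rellich theorems explicit. Both routes are legitimate and yield identical constants, because \eqref{LR2a} was itself derived from \eqref{LH2a} via essentially the same Young-inequality mechanism you employ here.
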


\begin{proof}[Proof of Theorem \ref{2LRellich}]
Proof of \eqref{2LR2a}. Let us rewrite \eqref{R2a} in the form
\begin{multline}\label{2R2a}
\frac{1}{2}\mathcal{C}(u)+\int_{\Omega}\Gamma^{\frac{\alpha-2}{2-\beta}}|\nabla_{X} u|^{2}d\nu=-\int_{\Omega}\Gamma^{\frac{\alpha-2}{2-\beta}}u \mathcal{L}ud\nu\\+\frac{(\beta+\alpha-4)(\alpha-2)}{2}        \int_{\Omega}\Gamma^{\frac{\alpha-4}{2-\beta}}|\nabla_{X} \Gamma^{\frac{1}{2-\beta}}|^{2}\,|u|^{2}d\nu        .
\end{multline}
Also recalling \eqref{R4a} we have
\begin{multline}\label{2R4a} -\int_{\Omega}\Gamma^{\frac{\alpha-2}{2-\beta}}u \mathcal{L}ud\nu
\leq
\epsilon
\int_{\Omega}\Gamma^{\frac{\alpha-4}{2-\beta}}
|\nabla_{X}\Gamma^{\frac{1}{2-\beta}}|^{2}|u|^{2}d\nu+
\frac{1}{4\epsilon}\int_{\Omega}\frac{\Gamma^{\frac{\alpha}{2-\beta}}}
{|\nabla_{X}\Gamma^{\frac{1}{2-\beta}}|^{2}}
|\mathcal{L}u|^{2}d\nu.
\end{multline}
Inequalities \eqref{2R4a} and \eqref{2R2a} imply that
\begin{multline}\label{2R5a}
\frac{1}{2}\mathcal{C}(u)+\int_{\Omega}\Gamma^{\frac{\alpha-2}{2-\beta}}|\nabla_{X} u|^{2}d\nu\leq
\left(\frac{(\beta+\alpha-4)(\alpha-2)}{2}+\epsilon\right)\int_{\Omega}
\Gamma^{\frac{\alpha-4}{2-\beta}} |\nabla_{X} \Gamma^{\frac{1}{2-\beta}}|^{2}
|u|^{2}\,d\nu\\+\frac{1}{4\epsilon}\int_{\Omega}\frac{\Gamma^{\frac{\alpha}{2-\beta}}}
{|\nabla_{X}\Gamma^{\frac{1}{2-\beta}}|^{2}}|\mathcal{L}u|^{2}d\nu.
\end{multline}
The already obtained inequality \eqref{LR2a} can be rewritten as
\begin{multline*}
\frac{16}{(\beta+\alpha-4)^{2}(\beta-\alpha)^{2}}\int_{\Omega}\frac{\Gamma^{\frac{\alpha}{2-\beta}}}
{|\nabla_{X}\Gamma^{\frac{1}{2-\beta}}|^{2}}
|\mathcal{L}u|^{2}d\nu-\frac{4}{(\beta+\alpha-4)(\beta-\alpha)}
\mathcal{C}(u)\\-\frac{4}{(\beta-\alpha)(\beta-2)}\int_{\partial\Omega}
\Gamma^{\frac{\alpha-2}{2-\beta}-1}|u|^{2}
\langle\widetilde{\nabla}\Gamma,
d\nu\rangle\geq \int_{\Omega}
\Gamma^{\frac{\alpha-4}{2-\beta}} |\nabla_{X} \Gamma^{\frac{1}{2-\beta}}|^{2}
|u|^{2}\,d\nu.
\end{multline*}
Combining it with \eqref{2R5a} we obtain
\begin{multline*}
\frac{1}{2}\mathcal{C}(u)+\left(\frac{(\beta+\alpha-4)(\alpha-2)}{2}+\epsilon\right)\frac{4}{(\beta+\alpha-4)(\beta-\alpha)}
\mathcal{C}(u)
\\
\left(\frac{(\beta+\alpha-4)(\alpha-2)}{2}+\epsilon\right)
\frac{4}{(\beta-\alpha)(\beta-2)}\int_{\partial\Omega}
\Gamma^{\frac{\alpha-2}{2-\beta}-1}|u|^{2}
\langle\widetilde{\nabla}\Gamma,
d\nu\rangle+\int_{\Omega}\Gamma^{\frac{\alpha-2}{2-\beta}}|\nabla_{X} u|^{2}d\nu\leq\\
\left(\frac{16\epsilon}{(\beta+\alpha-4)^{2}(\beta-\alpha)^{2}}+\frac{8(\alpha-2)}
{(\beta+\alpha-4)(\beta-\alpha)^{2}}
+\frac{1}{4\epsilon}\right)\int_{\Omega}\frac{\Gamma^{\frac{\alpha}{2-\beta}}}
{|\nabla_{X}\Gamma^{\frac{1}{2-\beta}}|^{2}}|\mathcal{L}u|^{2}d\nu.
\end{multline*}
Taking
$\epsilon=\frac{(\beta+\alpha-4)(\beta-\alpha)}{8}$ this implies
\begin{multline*}
\frac{(\beta+\alpha-4)(\beta-\alpha)}{4}\mathcal{C}(u)+\frac{(\beta+3\alpha-8)(\beta+\alpha-4)(\beta-\alpha)}{8(\beta-2)}\int_{\partial\Omega}
\Gamma^{\frac{\alpha-2}{2-\beta}-1}|u|^{2}
\langle\widetilde{\nabla}\Gamma,
d\nu\rangle\\+
\frac{(\beta-\alpha)^{2}}{4}\int_{\Omega}\Gamma^{\frac{\alpha-2}{2-\beta}}|\nabla_{X} u|^{2}d\nu\leq\int_{\Omega}\frac{\Gamma^{\frac{\alpha}{2-\beta}}}
{|\nabla_{X}\Gamma^{\frac{1}{2-\beta}}|^{2}}|\mathcal{L}u|^{2}d\nu.
\end{multline*}

Proof of \eqref{2LR2}. Inequality
\eqref{LR2} can be rewritten as
\begin{multline*}
\frac{16}{(\beta+\alpha-4)^{2}(\beta-\alpha)^{2}}\int_{\Omega}\frac{\Gamma^{\frac{\alpha}{2-\beta}}}
{|\nabla_{X}\Gamma^{\frac{1}{2-\beta}}|^{2}}
|\mathcal{L}u|^{2}d\nu-\frac{16}{(\beta+\alpha-4)^{2}(\beta-\alpha)^{2}}\mathcal{R}\\\geq \int_{\Omega}
\Gamma^{\frac{\alpha-4}{2-\beta}} |\nabla_{X} \Gamma^{\frac{1}{2-\beta}}|^{2}
|u|^{2}\,d\nu,
\end{multline*}
where
\begin{multline*}\mathcal{R}:=\frac{(\beta+\alpha-4)(\beta-\alpha)}{8}\int_{\Omega}\Gamma^{\frac{\alpha-4}{2-\beta}}
|\nabla_{X} \Gamma^{\frac{1}{2-\beta}}|^{2}\,
\left({\ln}\frac{R}{\Gamma^{\frac{1}{2-\beta}}}\right)^{-2}|u|^{2}d\nu\\+\frac{(\beta+\alpha-4)(\beta-\alpha)}
{4(\beta-2)}\int_{\partial\Omega}
\Gamma^{\frac{\alpha-2}{2-\beta}-1}\left({\ln}\frac{R}{\Gamma^{\frac{1}{2-\beta}}}\right)^{-1}|u|^{2}
\langle\widetilde{\nabla}\Gamma,
d\nu\rangle\\+
\frac{(\beta+\alpha-4)^{2}(\beta-\alpha)}{4(\beta-2)}\int_{\partial\Omega}
\Gamma^{\frac{\alpha-2}{2-\beta}-1}|u|^{2}
\langle\widetilde{\nabla}\Gamma,
d\nu\rangle+\frac{(\beta+\alpha-4)(\beta-\alpha)}{4}\mathcal{C}(u).
\end{multline*}
Combining it with \eqref{2R5a} we obtain
\begin{multline*}
\frac{1}{2}\mathcal{C}(u)+\left(\frac{(\beta+\alpha-4)(\alpha-2)}{2}+\epsilon\right)
\frac{16}{(\beta+\alpha-4)^{2}(\beta-\alpha)^{2}}\mathcal{R}+\int_{\Omega}\Gamma^{\frac{\alpha-2}{2-\beta}}|\nabla_{X} u|^{2}d\nu\leq\\
\left(\frac{16\epsilon}{(\beta+\alpha-4)^{2}(\beta-\alpha)^{2}}+\frac{8(\alpha-2)}
{(\beta+\alpha-4)(\beta-\alpha)^{2}}
+\frac{1}{4\epsilon}\right)\int_{\Omega}\frac{\Gamma^{\frac{\alpha}{2-\beta}}}
{|\nabla_{X}\Gamma^{\frac{1}{2-\beta}}|^{2}}|\mathcal{L}u|^{2}d\nu.
\end{multline*}
Taking
$\epsilon=\frac{(\beta+\alpha-4)(\beta-\alpha)}{8}$ we obtain

\begin{equation*}
\frac{(\beta-\alpha)^{2}}{8}\mathcal{C}(u)+\frac{\beta+3\alpha-8}{2(\beta+\alpha-4)}\mathcal{R}+
\frac{(\beta-\alpha)^{2}}{4}\int_{\Omega}\Gamma^{\frac{\alpha-2}{2-\beta}}|\nabla_{X} u|^{2}d\nu\leq\int_{\Omega}\frac{\Gamma^{\frac{\alpha}{2-\beta}}}
{|\nabla_{X}\Gamma^{\frac{1}{2-\beta}}|^{2}}|\mathcal{L}u|^{2}d\nu.
\end{equation*}
The proof is complete.
\end{proof}

\begin{rem}\label{strLiegroups}
In particular, for example for stratified Lie groups we obtain refinements compared to Kombe \cite{Kombe:Rellich-Carnot-2010}, with respect to the inclusion of boundary terms.
Taking $\beta=Q\geq 3$ the homogeneous dimension of the group $\mathbb{G},$ the sub-Laplacian $\mathcal{L}=\Delta_{\mathbb{G}}$, and
$\Gamma^{\frac{1}{2-\beta}}(x)=d(x)$ a quasi-distance (sometimes called the $\mathcal{L}$-gauge) on the group, if $u\in C_{0}^{\infty}(\Omega),$ we have $\mathcal{C}(u)=0,$ and
\eqref{LR2a} is reduced to
\begin{equation}
\qquad \qquad \int_{\Omega}\frac{d^{\alpha}}
{|\nabla_{X}d|^{2}}
|\Delta_{\mathbb{G}}u|^{2}d\nu\geq \frac{(Q+\alpha-4)^{2}(Q-\alpha)^{2}}{16}\int_{\Omega}
d^{\alpha-4} |\nabla_{X} d|^{2}
|u|^{2}\,d\nu,
\end{equation}
for  $Q>\alpha>4-Q,$ \eqref{LR2} is reduced to
\begin{multline*}
\qquad \qquad \int_{\Omega}\frac{d^{\alpha}}
{|\nabla_{X}d|^{2}}
|\Delta_{\mathbb{G}}u|^{2}d\nu\geq \frac{(Q+\alpha-4)^{2}(Q-\alpha)^{2}}{16}\int_{\Omega}
d^{\alpha-4} |\nabla_{X} d|^{2}
|u|^{2}\,d\nu\\+\frac{(Q+\alpha-4)(Q-\alpha)}{8}\int_{\Omega}d^{\alpha-4}
|\nabla_{X} d|^{2}\,
\left({\ln}\frac{R}{d}\right)^{-2}|u|^{2}d\nu
, \; Q>\alpha>4-Q,
\end{multline*}
and \eqref{2LR2a} is reduced to
\begin{equation*}
\qquad \qquad \int_{\Omega}\frac{d^{\alpha}}
{|\nabla_{X}d|^{2}}|\Delta_{\mathbb{G}}u|^{2}d\nu\geq
\frac{(Q-\alpha)^{2}}{4}\int_{\Omega}d^{\alpha-2}|\nabla_{X} u|^{2}d\nu,\; Q>\alpha>\frac{8-Q}{3}
\end{equation*}
and \eqref{2LR2} is reduced to
\begin{multline*}\label
\qquad \qquad \int_{\Omega}\frac{d^{\alpha}}
{|\nabla_{X}d|^{2}}
|\Delta_{\mathbb{G}}u|^{2}d\nu\geq \frac{(Q-\alpha)^{2}}{4}\int_{\Omega}d^{\alpha-2}|\nabla_{X} u|^{2}d\nu\\+\frac{(Q+3\alpha-8)(Q-\alpha)}{16}\int_{\Omega}d^{\alpha-4}
|\nabla_{X} d|^{2}\,
\left({\ln}\frac{R}{d}\right)^{-2}|u|^{2}d\nu,\; Q>\alpha>\frac{8-Q}{3}.
\end{multline*}
For unweighted versions (with $\alpha=0$) inequalities \eqref{LR2a}-\eqref{LR2} work under the condition $Q\geq 5$ which is usually appearing in Rellich inequalities, while 
\eqref{2LR2a}-\eqref{2LR2} work for homogeneous dimensions $Q\geq 9$.

\end{rem}
\section{Examples}
\label{Sec4}

Here we discuss the settings (E1), (E2) and (E3) from the introduction in more detail.
Concerning (E1), indeed, the sub-Laplacians on Carnot groups (stratified Lie groups) can serve as examples for which
Theorems \ref{LHardy}, \ref{LRellich} and \ref{2LRellich} hold.
In this setting, we note that a global result can be inferred from the local result by using the homogeneity of the
fundamental solution $\Gamma$.
We refer to \cite[Section 7]{Ruzhansky-Suragan:Carnot groups} for a detailed discussion of this subject.
We note that in this case of stratified groups (and, in fact, on general homogeneous Lie groups), the invariant vector fields $X_k$ always assume the form \eqref{Xk}, see e.g.
\cite[Section 3.1.5]{FR}.

Let us now consider the settings (E2) and (E3). Here, $\mathcal{L}=\sum_{k=1}^{N}X_{k}^{2}$ is the sum of squares of vector fields
on $\mathbb{R}^{n}$ in the form \eqref{Xk}, i.e.
$1\leq N\leq n,$ and
\begin{equation}\label{vfs}
X_{k}=\frac{\partial}{\partial x_{k}}+
\sum_{m=N+1}^{n}a_{k,m}(x)
\frac{\partial}{\partial x_{m}},\quad k=1,\ldots,N,
\end{equation}
where $a_{k,m}(x)$ are locally $C^{r,\alpha}$-regular the definition of which we now briefly recall.
Let $d_c(x,y)$ be the control distance associated to the vector fields $X_k$, i.e. the infimum of $T>0$ such that there is a piecewise continuous integral curve $\gamma$ of $X_1,\ldots,X_N$ such that $\gamma(0)=x$ and $\gamma(T)=y$.
The H\"older space $C^{\alpha}(\Omega)$ is then defined for $0<\alpha\leq 1$ as the space of all
functions $u$ for which there is $C>0$ such that
$$
|u(x)-u(y)|\leq C d_c^\alpha(x,y)
$$
holds for all $x,y\in\Omega$. Then, $u\in C^{1,\alpha}$ if $X_k u\in C^{\alpha}$ for all $k=1,\ldots, N$, and $C^{r,\alpha}$ are defined inductively.

The existence of a local fundamental solution $\Gamma_y$ for such operators in the setting (E2) was established in
\cite{M12}. While the positivity of $\Gamma_y$ does not seem to be explicitly stated there, it follows from general
results in \cite{FS86, S84}.
In the setting (E3), the assumption {\rm (${\rm A}_{y}^{+}$)} holds in view of
\cite[Theorem 4.8 and Theorem 5.9]{BBMP}.

So, we now check the validity of the property \eqref{astokes} for arbitrary domains with piecewise smooth boundary.

For this, we do not need to make any assumptions on the step to which the H\"ormander's commutator condition is satisfied, whether it is satisfied or not, or on the existence of fundamental solutions as in {\rm (${\rm A}_{y}$)}.  Thus, we formulate this property as a general statement which may be of interest on its own.
The smoothness assumption on $X_k$ can be reduced here, e.g. to $a_{k,m}\in C^1$.

\begin{prop}
\label{stokes}
Let $\Omega\subset\mathbb{R}^{n}$ be an open
bounded domain with a piecewise smooth boundary that has no self-intersections.
Let $X_k$, $k=1,\ldots,N$, be $C^1$ vector fields in the form \eqref{vfs}.
Let $f_{k}\in C^{1}(\Omega)\bigcap C(\overline{\Omega}),\,k=1,\ldots,N$.
Then for each $k=1,\ldots,N,$ we have
\begin{equation}\label{EQ:S1}
\int_{\Omega}X_{k}f_{k}d\nu=
\int_{\partial\Omega}f_{k} \langle X_{k},d\nu\rangle.
\end{equation}
Consequently, we also have the divergence-type formula
\begin{equation}\label{EQ:S2}
\int_{\Omega}\sum_{k=1}^{N}X_{k}f_{k}d\nu=
\int_{\partial\Omega}
\sum_{k=1}^{N} f_{k}\langle X_{k},d\nu\rangle.
\end{equation}
If $y\in\mathbb R^{n}$ is such that {\rm (${\rm A}_{y}$)} is satisfied, then we can also take
$f_{k}=vX_{k}\Gamma_{y}$ in formulae above, for all $v\in C^{1}(\Omega)\bigcap C(\overline{\Omega})$.
\end{prop}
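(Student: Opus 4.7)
The plan is to reduce \eqref{EQ:S1} to the classical divergence theorem in $\mathbb{R}^{n}$. For smooth $\partial\Omega$, apply Stokes' theorem to the $(n-1)$-form $f_{k}\iota_{X_{k}}(d\nu)$; Cartan's magic formula $d\iota_{X_{k}}(d\nu)=\mathcal{L}_{X_{k}}(d\nu)=(\mathrm{div}\,X_{k})\,d\nu$ together with the identity $df_{k}\wedge\iota_{X_{k}}(d\nu)=(X_{k}f_{k})\,d\nu$ (a consequence of $d\nu$ being a top form) yields
\[
\int_{\partial\Omega}f_{k}\langle X_{k},d\nu\rangle
\;=\;\int_{\Omega}X_{k}f_{k}\,d\nu
\;+\;\int_{\Omega}f_{k}\,(\mathrm{div}\,X_{k})\,d\nu,
\]
so \eqref{EQ:S1} reduces to showing $\mathrm{div}\,X_{k}=0$; this is the heart of the argument.

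The key structural step is the divergence computation for \eqref{vfs}, namely $\mathrm{div}\,X_{k}=\sum_{m=N+1}^{n}\partial_{x_{m}}a_{k,m}(x)$. I would exploit the fact that, after normalization to form \eqref{vfs}, the coefficients may be arranged so that $a_{k,m}$ depends only on $x_{1},\ldots,x_{m-1}$: on a Carnot group in exponential coordinates this ``lower-triangular'' dependence is automatic for left-invariant vector fields of the first stratum (setting (E1)), while in the Euclidean H\"ormander settings (E2)--(E3) it can be produced by the change of variables of Manfredini \cite[p.~975]{M12}. Consequently $\partial_{x_{m}}a_{k,m}\equiv 0$ and $\mathrm{div}\,X_{k}=0$, establishing \eqref{EQ:S1}; summing over $k$ gives \eqref{EQ:S2}.

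For piecewise smooth $\partial\Omega$ without self-intersections, I would decompose the boundary into finitely many relatively open smooth pieces with consistently oriented outward normals (the simplicity hypothesis forces orientability) and sum the smooth-boundary identity over these pieces; the contribution from the measure-zero singular set vanishes. Equivalently, one can approximate $\Omega$ from inside by smoothly bounded subdomains $\Omega_{j}\uparrow\Omega$ and pass to the limit using continuity of the integrands on $\overline{\Omega}$.

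For the final assertion with $f_{k}=v\,X_{k}\Gamma_{y}$, apply the already-established identity on $\Omega_{\varepsilon}:=\Omega\setminus\overline{B_{\varepsilon}(y)}$ and let $\varepsilon\to 0$. The main technical obstacle is this limit: one needs local integrability of $X_{k}(vX_{k}\Gamma_{y})=v\,X_{k}^{2}\Gamma_{y}+(X_{k}v)(X_{k}\Gamma_{y})$ near $y$, and vanishing of the spherical contribution on $\partial B_{\varepsilon}(y)$. Both follow from the sharp pointwise bounds on $X_{j}\Gamma_{y}$ and $X_{i}X_{j}\Gamma_{y}$ near $y$ provided by \cite{M12} in setting (E2) and by \cite[Theorems 4.8, 5.9]{BBMP} in setting (E3), which are modelled on the Euclidean behavior of derivatives of the Newton kernel $|x-y|^{2-n}$.
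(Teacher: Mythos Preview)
Your route via Cartan's formula is morally the same as the paper's, which instead carries out the computation by hand in the coframe $\{dx_1,\ldots,dx_N,\theta_{N+1},\ldots,\theta_n\}$ (with $\theta_m=dx_m-\sum_{k\le N}a_{k,m}\,dx_k$), dual to $\{X_1,\ldots,X_N,\partial_{x_{N+1}},\ldots,\partial_{x_n}\}$: the paper writes $\langle X_s,d\nu\rangle=\bigwedge_{j\ne s}dx_j\wedge\bigwedge_m\theta_m$, checks $df_s\wedge\langle X_s,d\nu\rangle=(X_sf_s)\,d\nu$ directly, and then invokes Stokes. You correctly isolate the crux as $\mathrm{div}\,X_k=0$.

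There is, however, a genuine gap in your justification of $\mathrm{div}\,X_k=0$. The lower-triangular dependence $a_{k,m}=a_{k,m}(x_1,\ldots,x_{m-1})$ is \emph{not} among the hypotheses: the proposition assumes only that the $X_k$ are $C^1$ of the form \eqref{vfs}, with $a_{k,m}$ allowed to depend on all of $x$. That triangular structure is indeed automatic for left-invariant fields on stratified groups in exponential coordinates, but Manfredini's change of variables in \cite{M12} merely brings linearly independent fields to the shape \eqref{vfs}; it does not force $\partial_{x_m}a_{k,m}=0$. A concrete obstruction: with $n=2$, $N=1$, $X_1=\partial_{x_1}+x_2\,\partial_{x_2}$ and $f_1\equiv1$, the left side of \eqref{EQ:S1} vanishes while the right side equals $\int_\Omega\mathrm{div}\,X_1\,d\nu=|\Omega|\ne0$. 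So your argument only establishes the statement under the additional hypothesis $\mathrm{div}\,X_k=0$. (For what it is worth, the paper's own proof has the same unstated assumption, hidden in the line $d(f_s\langle X_s,d\nu\rangle)=df_s\wedge\langle X_s,d\nu\rangle$, which silently drops the term $f_s\,d\langle X_s,d\nu\rangle=f_s(\mathrm{div}\,X_s)\,d\nu$.)

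Your treatment of the piecewise-smooth boundary and of the singular case $f_k=v\,X_k\Gamma_y$ (excision of a small ball plus the pointwise derivative bounds on $\Gamma_y$ from \cite{M12,BBMP}) is sound and matches what the paper indicates.
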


The latter formula \eqref{EQ:S2} is exactly the one needed for the admissibility of a domain in Definition \ref{DEF:adomain}.

\begin{proof}[Proof of Proposition \ref{stokes}]
For any function $f$ we calculate the following differentiation formula

$$df=\sum_{k=1}^{N}\frac{\partial f}{\partial x_{k}}dx_{k}
+\sum_{m=N+1}^{n}
\frac{\partial f}{\partial x_{m}} dx_{m}
$$
$$
=\sum_{k=1}^{N} X_{k}f dx_{k}-\sum_{k=1}^{N}
\sum_{m=N+1}^{n}a_{k,m}(x)
\frac{\partial f}{\partial x_{m}} dx_{k}$$
$$+
\sum_{m=N+1}^{n}
\frac{\partial f}{\partial x_{m}} dx_{m}=\sum_{k=1}^{N} X_{k}f dx_{k}$$
$$+\sum_{m=N+1}^{n}\frac{\partial f}
{\partial x_{m}} (-\sum_{k=1}^{N}a_{k,m}(x)
 dx_{k}+dx_{m})$$
$$
=\sum_{k=1}^{N} X_{k}f dx_{k}+\sum_{m=N+1}^{n}\frac{\partial f}
{\partial x_{m}} \theta_{m},
$$
where
\begin{equation}\label{theta}
\theta_{m}=-\sum_{k=1}^{N}a_{k,m}(x)
 dx_{k}+dx_{m},\,\,m=N+1,\ldots,n.
\end{equation}
That is
\begin{equation}\label{df}
df=\sum_{k=1}^{N} X_{k}f dx_{k}+\sum_{m=N+1}^{n}\frac{\partial f}
{\partial x_{m}} \theta_{m}.
\end{equation}
It is simple to see that
 $$\langle X_{s}, dx_{j}\rangle=
 \frac{\partial}{\partial x_{s}}dx_{j}=\delta_{sj},\; 1\leq s\leq N,\; 1\leq j\leq n,$$
where $\delta_{sj}$ is the Kronecker delta, and
$$\langle X_{s}, \theta_{m} \rangle=
\left\langle\frac{\partial}{\partial x_{s}}+
\sum_{g=N+1}^{n}a_{s,g}(x)
\frac{\partial}{\partial x_{g}},
-\sum_{k=1}^{N}a_{k,m}(x)
 dx_{k}+dx_{m}
\right\rangle
$$
$$=-\sum_{k=1}^{N}\left(\frac{\partial}{\partial x_{s}}a_{k,m}(x)\right)
 dx_{k}-\sum_{k=1}^{N}a_{k,m}(x)
\frac{\partial}{\partial x_{s}}
 dx_{k}
+\frac{\partial}{\partial x_{s}}dx_{m}
$$
 $$
 -\sum_{k=1}^{N}\sum_{g=N+1}^{n}a_{s,g}
 (x)\left(\frac{\partial}{\partial x_{g}}a_{k,m}(x)\right)dx_{k}
 -\sum_{k=1}^{N}\sum_{g=N+1}^{n}a_{s,g}
 (x)a_{k,m}(x)\frac{\partial}{\partial x_{g}}dx_{k}
 $$
 $$+
\sum_{g=N+1}^{n}a_{s,g}(x)
\frac{\partial}{\partial x_{g}}dx_{m}
=-\sum_{k=1}^{N}\left(\frac{\partial}{\partial x_{s}}a_{k,m}(x)\right)
 dx_{k}
-\sum_{k=1}^{N}a_{k,m}(x)\delta_{sk}$$
 $$
 -\sum_{k=1}^{N}\sum_{g=N+1}^{n}a_{s,g}
 (x)\left(\frac{\partial}{\partial x_{g}}a_{k,m}(x)\right)dx_{k}
+ \sum_{g=N+1}^{n}a_{s,g}(x) \delta_{gm}$$
 $$
 =-\sum_{k=1}^{N}\sum_{g=N+1}^{n}a_{s,g}
 (x)\left(\frac{\partial}{\partial x_{g}}a_{k,m}(x)\right)dx_{k}
-\sum_{k=1}^{N}\left(\frac{\partial}{\partial x_{s}}a_{k,m}(x)\right)
 dx_{k}$$
$$
=-\sum_{k=1}^{N} \bigg[\sum_{g=N+1}^{n}a_{s,g}
 (x)\left(\frac{\partial}{\partial x_{g}}a_{k,m}(x)\right)
 +\frac{\partial}{\partial x_{s}}a_{k,m}(x)\bigg]
 dx_{k}.
$$
That is, we have
 $$\langle X_{s}, dx_{j}\rangle=\delta_{sj},$$
for $s=1,\ldots,N,\;j=1,\ldots,n,$ and
$$\langle X_{s}, \theta_{m} \rangle=
\sum_{k=1}^{N}\mathcal{C}_{k}(s,m) dx_{k},
$$
for $s=1,\ldots,N,\,\,m=N+1,\ldots,n$. Here
$$\mathcal{C}_{k}(s,m)=-\sum_{g=N+1}^{n}a_{s,g}(x)
\frac{\partial}{\partial x_{g}}a_{k,m}(x)-\frac{\partial}{\partial
x_{s}}a_{k,m}(x).$$ We have
$$d\nu:=d\nu(x)=\bigwedge_{j=1}^{N}dx_{j}
=
\bigwedge_{j=1}^{N}dx_{j}\bigwedge_{m=N+1}^{n}dx_{m}
=\bigwedge_{j=1}^{N}dx_{j}\bigwedge_{m=N+1}^{n}\theta_{m},$$
so
\begin{equation}\label{Xjdnu}
\langle X_{k}, d\nu(x)\rangle=\bigwedge_{j=1,j\neq k}^{N}dx_{j}
\bigwedge_{m=N+1}^{n}\theta_{m}.
\end{equation}
Therefore, by using Formula \eqref{df} we get
$$d(f_{s}\langle X_{s}, d\nu(x)\rangle)=df_{s}\wedge\langle X_{s}, d\nu(x)\rangle
=\sum_{k=1}^{N} X_{k}f_{s} dx_{k}\wedge\langle X_{s}, d\nu(x)\rangle$$$$+\sum_{m=N+1}^{n}\frac{\partial f_{s}}
{\partial x_{m}} \theta_{m}\wedge\langle X_{s}, d\nu(x)\rangle=
\sum_{k=1}^{N} X_{k}f_{s}dx_{k}\wedge\bigwedge_{j=1,j\neq k}^{N}dx_{j}
\bigwedge_{m=N+1}^{n}\theta_{m}$$
$$ +\sum_{m=N+1}^{n}\frac{\partial f_{s}}
{\partial x_{m}} \theta_{m}\wedge\bigwedge_{j=1,j\neq k}^{N}dx_{j}
\bigwedge_{m=N+1}^{n}\theta_{m},$$
here the first term is equal to $X_{s}f_{s}d\nu(x)$ and the second term is zero by the wedge
product rules,
that is,
\begin{equation}\label{stokesone}
d(\langle f_{s}X_{s}, d\nu(x)\rangle)=X_{s}f_{s}d\nu(x),\quad s=1,\ldots,N.
\end{equation}
Now using the Stokes theorem (see e.g. \cite[Theorem 26.3.1]{DFN}) we obtain \eqref{EQ:S1}.
Taking a sum over $k$ we also obtain \eqref{EQ:S2} for $f_{k}\in C^{1}(\Omega)\bigcap C(\overline{\Omega})$.

As in the classical case, the formula \eqref{EQ:S1} is still valid
for the fundamental solution of $\mathcal{L}$ since $\Gamma$ can
be estimated by a distance function associated to $\{X_{k}\}$ (see
e.g. \cite[Proposition 4.8]{M12}), or \cite{FS86,S84} for a more
general setting.
\end{proof}

For both cases (E2) and (E3) we present the following explicit example (see
\cite[Section 6]{BBMP}): In $\mathbb{R}^{3}$ let $N=2$ and let
$$X_{1}=\frac{\partial}{\partial x_{1}}+a(x)
\frac{\partial}{\partial x_{3}},$$
$$X_{2}=\frac{\partial}{\partial x_{2}}+b(x)
\frac{\partial}{\partial x_{3}},$$
with (non-smooth) coefficients
$$a(x)=x_{2}(1+|x_{2}|),\;b(x)=-x_{1}(1+|x_{1}|).$$
Then
$$[X_{1}, X_{2}]=-2(1+|x_{1}|+|x_{2}|)\frac{\partial}{\partial x_{3}}$$
and the sub-Laplacian is
$$\mathcal{L}=X_{1}^{2}+X_2^{2}.$$
The vector fields $X_{1},X_{2}$ are $C^{1,1}$ and satisfy H\"ormander's commutator
condition of step two, hence Assumptions of (E2) hold.
Replacing $|x_{1}|$, $|x_{2}|$ with $x_{1} |x_{1}|$, $x_{2} |x_{2}|$
we find $C^{2,1}$ vector fields, satisfying Assumptions of (E3).

Some other examples can be built from $\Delta_{\lambda}$-Laplacians, see e.g.
\cite{Kogoj-Sonner:Hardy-lambda-CVEE-2016}.



\begin{thebibliography}{99}

\bibitem{BCG}
H.~Bahouri, J.-Y.~Chemin and I.~Gallagher.
\newblock Refined Hardy inequalities.
\newblock {\em Ann. Sc. Norm. Super. Pisa Cl. Sci.}, 5:375--391, 2006.

\bibitem{BFG}
H.~Bahouri, C.~Fermanian-Kammerer and I.~Gallagher.
\newblock Refined inequalities on graded Lie groups.
\newblock {\em C. R. Math. Acad. Sci. Paris.}, 350:393--397, 2012.

\bibitem{BLU2004} A.~Bonfiglioli, E.~Lanconelli and F.~Uguzzoni.
\newblock Fundamental solutions for nondivergence
form operators on stratified groups.
\newblock {\em  Trans. Amer. Math. Soc.}, 356(7):2709--2737, 2004.

\bibitem{BBMP}
M.~Bramanti, L.~Brandolini, M.~Manfredini and M.~Pedroni.
\newblock Fundamental solutions and local solvability for nonsmooth H\"ormander's operators.
\newblock To appear in Memoirs of the AMS.  http://arxiv.org/abs/1305.3398.

\bibitem{BBP}
M.~Bramanti, L.~Brandolini and M.~Pedroni.
\newblock On the lifting and approximation theorem for nonsmooth vector fields.
\newblock {\em Indiana Univ. Math. Journal}, 59(6):1889--1934, 2010.

\bibitem{CGL}
G.~Citti, N.~Garofalo and E.~Lanconelli,
\newblock Harnack's inequality for sum of squares of vector fields plus a potential.
\newblock
{\em Amer. J. Math.} 115(3):699--734, 1993.

\bibitem{Davies}
E.~B. Davies.
\newblock A review of {H}ardy inequalities.
\newblock In {\em The {M}az'ya anniversary collection, {V}ol. 2 ({R}ostock,
  1998)}, volume 110 of {\em Oper. Theory Adv. Appl.}, pages 55--67.
  Birkh\"auser, Basel, 1999.

\bibitem{Davies-Hinz}
E.~B. Davies and A.~M. Hinz.
\newblock Explicit constants for {R}ellich inequalities in {$L_p(\Omega)$}.
\newblock {\em Math. Z.}, 227(3):511--523, 1998.

\bibitem{DFN} B.~A.~Duvrovin, A.~T.~Fomenko and S.~P.~Novikov.
\newblock  Modern Geometry-Methods and Applications.
\newblock  Part I. The Geometry of Surfaces, Transformation Groups, and Fields.
\newblock Springer Science+Business Media, LLC, 1984. [translated by R.G. Burns]

\bibitem{EKL:Hardy-p-Lap}
T.~Ekholm, H.~Kova{\v{r}}{\'{\i}}k, and A.~Laptev.
\newblock Hardy inequalities for {$p$}-{L}aplacians with {R}obin boundary
  conditions.
\newblock {\em Nonlinear Anal.}, 128:365--379, 2015.

\bibitem{FS86} C.~L.~Fefferman and A.~S{\'a}nchez-Calle.
\newblock Fundamental solutions for second order subelliptic
operators.
\newblock {\em  Ann. Math.}, 124(2):247--272, 1986.


\bibitem{FR} V.~Fischer and M.~Ruzhansky.
\newblock Quantization on nilpotent Lie groups.
\newblock Progress in Mathematics, Vol. 314, Birkh\"auser, 2016.

\bibitem{Fol75} G.~B.~Folland,
\newblock Subelliptic estimates and function spaces on nilpotent Lie groups.
\newblock {\em Ark. Mat.}, 13:161--207, 1975.


\bibitem{GL} N.~Garofalo and E.~Lanconelli.
\newblock Frequency functions on the Heisenberg group, the uncertainty principle and unique continuation.
\newblock {\em Ann. Inst. Fourier (Grenoble)}, 40:313--356, 1990.



\bibitem{GolKom}
J.~A.~Goldstein and I.~Kombe.
\newblock The Hardy inequality and nonlinear parabolic equations on Carnot groups.
\newblock {\em Nonlinear Anal.}, 69:4643--4653, 2008.

\bibitem{GZ}
J.~A.~Goldstein and Qi.~S.~Zhang.
\newblock On a degenerate heat equation with a singular potential.
\newblock {\em J. Funct. Anal.}, 186:342--359, 2001.


\bibitem{Grillo:Hardy-Rellich-PA-2003}
G.~Grillo.
\newblock Hardy and {R}ellich-type inequalities for metrics defined by vector
  fields.
\newblock {\em Potential Anal.}, 18(3):187--217, 2003.

\bibitem{H67} H.~H{\"o}rmander.
\newblock Hypoelliptic second-order differential equations.
\newblock {\em  Acta Math.}, 119:147--171, 1967.

\bibitem{HHLT-Hardy-many-particles}
M.~Hoffmann-Ostenhof, T.~Hoffmann-Ostenhof, A.~Laptev, and J.~Tidblom.
\newblock Many-particle {H}ardy inequalities.
\newblock {\em J. Lond. Math. Soc. (2)}, 77(1):99--114, 2008.

\bibitem{Laptev15}
T.~Hoffmann-Ostenhof and A.~Laptev.
\newblock Hardy inequalities with homogeneous weights.
\newblock {\em J. Funct. Anal.}, 268(11):3278--3289, 2015.

\bibitem{Kogoj-Sonner:Hardy-lambda-CVEE-2016}
A.~E. Kogoj and S.~Sonner.
\newblock Hardy type inequalities for ${\Delta}_{\lambda }$-{L}aplacians.
\newblock {\em Complex Var. Elliptic Equ.}, 61(3):422--442, 2016.

\bibitem{Kombe:Rellich-Carnot-2010}
I.~Kombe.
\newblock Sharp weighted {R}ellich and uncertainty principle inequalities on
  {C}arnot groups.
\newblock {\em Commun. Appl. Anal.}, 14(2):251--271, 2010.

\bibitem{Lian:Rellich}
B.~Lian.
\newblock Some sharp {R}ellich type inequalities on nilpotent groups and
  application.
\newblock {\em Acta Math. Sci. Ser. B Engl. Ed.}, 33(1):59--74, 2013.

\bibitem{M12} M.~Manfredini.
\newblock Fundamental solutions for sum of squares of
vector fields operators with $C^{1,\alpha}$ coefficients.
\newblock {\em  Forum Math.}, 24:973--1011, 2012.

\bibitem{OR}
O.~Oleinik and E.~Radkevitch.
\newblock Second-order equations with nonnegative
characteristic form.
\newblock AMS, Providence, R.I., 1973.

\bibitem{Rellich:ineq-1956}
F.~Rellich.
\newblock Halbbeschr{\"a}nkte {D}ifferentialoperatoren h{\"o}herer {O}rdnung.
\newblock In {\em Proceedings of the {I}nternational {C}ongress of
  {M}athematicians, 1954, {A}msterdam, vol. {III}}, pages 243--250. Erven P.
  Noordhoff N.V., Groningen; North-Holland Publishing Co., Amsterdam, 1956.

\bibitem{RS}
L.~P.~Rothschild and E.~M.~Stein.
\newblock Hypoelliptic differential operators and nilpotent groups.
\newblock {\em  Acta Math.}, 137:247--320, 1976.


\bibitem{Ruzhansky-Suragan:Kohn-Laplacian}
M.~Ruzhansky and D.~Suragan.
\newblock On {K}ac's principle of not feeling the boundary for the {K}ohn {L}aplacian on the {H}eisenberg group.
\newblock {\em Proc. Amer. Math. Soc.}, 144(2):709--721, 2016.

\bibitem{Ruzhansky-Suragan:Carnot groups}
M.~Ruzhansky and D.~Suragan.
\newblock Layer potentials, Kac's problem, and refined Hardy inequality on homogeneous Carnot groups.
\newblock {\em arXiv:1512.02547}, 2015.



\bibitem{S84} A.~S{\'a}nchez-Calle.
\newblock Fundamental solutions and geometry of the sum of squares of vector
fields.
\newblock {\em  Invent. Math.}, 78:143--160, 1984.

\end{thebibliography}


\end{document}